\numberwithin{equation}{section}
\theoremstyle{plain}
\newtheorem{Th}{Theorem}[section]
\newtheorem{Lemma}[Th]{Lemma}
\newtheorem{Cor}[Th]{Corollary}
\newtheorem{Prop}[Th]{Proposition}
\theoremstyle{definition}
\newtheorem{Def}[Th]{Definition}
\newtheorem{Question}[Th]{Question}
\newtheorem{Rem}[Th]{Remark}
\newtheorem{Remark}[Th]{Remark}
\newtheorem{?}[Th]{Problem}
\newcommand{\ZZ}{\mathbb{Z}}
\newcommand{\F}{\mathbb{F}}
\newcommand{\m}{\mathcal}
\newcommand{\lef}{\left\lfloor}
\newcommand{\rrr}{\right\rfloor}
\newcommand{\mainsectionstyle}{%
  \renewcommand{\@secnumfont}{\bfseries}
  \renewcommand\section{\@startsection{section}{1}%
	\z@{.7\linespacing\@plus\linespacing}{.5\linespacing}%
	{\normalfont\large\scshape\centering\bfseries}}

}
\begin{document}
\mainsectionstyle
\title{Clique number of Xor products of Kneser graphs}
\author{
András Imolay 
\and 
Anett Kocsis
\and
Ádám Schweitzer}
\date{}
\thanks{The first and the second authors are supported by the ÚNKP-20-1 New National Excellence Program of the Ministry for Innovation and Technology from the source of the National Research, Development and Innovation Fund.}

 \subjclass[2020]{Primary: 05D05. Secondary: 05C69, 05C76}

 \keywords{extremal set theory, intersecting families, Xor product} 
\begin{abstract}
In this article we investigate a problem in graph theory, which has an equivalent reformulation in extremal set theory similar to the problems researched in \cite{article} by Gyula O.H. Katona, who proposed our problem as well. In the graph theoretic form we examine the clique number of the Xor product of two isomorphic $KG(N,k)$ Kneser graphs. Denote this number with $f(k,N)$.
We give lower and upper bounds on $f(k,N)$, and we solve the problem up to a constant deviation depending only on $k$, and find the exact value for $f(2,N)$ if $N$ is large enough. Also we compute that $f(k,k^2)$ is asymptotically equivalent to $k^2$.  
\end{abstract}

\maketitle

\section{Introduction}

\noindent In the abstract we briefly stated the problem in the graph theoretic form, here we introduce the definitions that are necessary to understand the statement. 

\begin{Def}[Kneser graphs]
A Kneser graph $KG(N,k)$ is a graph defined as follows.

\noindent The vertices correspond to the $k$ element subsets of a given $N$ element set, and two vertices are connected if and only if the corresponding subsets are disjoint.
\end{Def}

The following graph product was studied in \cite{Thomason1997GraphPA}.%még valami ide? 
\begin{Def}[Xor product]
Given two graphs, $G$ and $H$, let us denote by $V(G)$, $V(H)$ and $E(G)$, $E(H)$ their vertex and edge sets respectively. Define the Xor product, denoted by $G \cdot H$, as the graph with the vertex set $V=V(G) \times V(H)$ (where $V(G) \times V(H)$ denotes the Cartesian product of the vertex sets), and two vertices $(g,h)$ and $(g',h')$ are connected if and only if among the statements $gg' \in E(G)$ and $hh' \in E(H)$ exactly one occurs. 
\end{Def} 

\begin{Def}[Clique number]
Let $G=(V,E)$ be a simple graph. A subset $H\subset V$ is called a clique, if any two vertices in $H$ are connected by an edge in $G$.
The clique number of $G$ is simply the size of a maximal clique, denoted by $\omega(G)$.
\end{Def}

So now we have everything to state the main problem of this article.

\begin{?}
We want to find the value of $\omega(KG(N,k) \cdot KG(N,k))$. 

\noindent As the exact value for all pairs $(k,N)$ is beyond our reach, we focus on giving lower and upper bounds, determining the asymptotic behaviour of these functions, and calculating the exact value for a large family of pairs $(k,N)$. 
\end{?}

Similar questions have been studied about the independence number of different, more conventional (Cartesian, Tensor and Strong) products of the Kneser graph in \cite{BRESAR20191017}. The clique number in these graphs for any conventional graph product can be determined easily as discussed in Subsection \ref{kneser}. 
In \cite{xorproduct} the limits of both the independence and clique numbers are studied for Xor powers.

Our problem can be rephrased as an extremal combinatorial problem. This was our original problem proposed by Gyula O.H. Katona. The problem is the following.
%We turn to the equivalent, extremal set theory problem. 

Let $k$ and $N$ be fixed positive integers. Let $A$ and $B$ be disjoint sets with $|A|=|B|=N$. We will consider families of subsets on the base set $A \cup B$. For easier notation we introduce the following.

\begin{Def} \label{S_A}
Throughout the article for a set $S \subset A \cup B$ we denote $S\cup A$ as $S_A$ and $S\cup B$ as $S_B$. 
\end{Def}

\begin{Def}[Semiintersecting family]
We call a family of sets $\mathcal{S}$ on the base set $A \cup B$ semiintersecting, if the following conditions hold.
\begin{itemize}
    \item $|S_A|=|S_B|=k$ for all $S \in \mathcal{S}$
    \item $(S_A) \cap (T_A)=\emptyset$ if and only if $(S_B) \cap (T_B)\neq \emptyset$ for all $S \neq T \in \mathcal{S}$ 
\end{itemize}
In other words the second condition states that two sets from $\m{S}$ intersect exactly at one side, so $S \cap T \neq \emptyset$ and either $S \cap T \subset A$ or $S \cap T \subset B$.

\noindent Let $f(k,N):=\max\{\ |\m{S}| \ : \ \m{S} \text{ is semiintersecting with constants $k$ and $N$} \}$. 
\end{Def}

The next proposition states the equivalence of the two problems.

\begin{Prop}
$$f(k,N)=\omega(KG(N,k) \cdot KG(N,k))$$
\end{Prop}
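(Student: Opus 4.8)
The plan is to set up an explicit, cardinality-preserving bijection between semiintersecting families on $A\cup B$ and cliques of $KG(N,k)\cdot KG(N,k)$; the equality of the two quantities then follows by taking the maximum of $|\cdot|$ on each side.

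First I would fix bijections $A\to\{1,\dots,N\}$ and $B\to\{1,\dots,N\}$ and use them to identify the $k$-element subsets of $A$ (and, separately, of $B$) with $V(KG(N,k))$. If $S\subseteq A\cup B$ satisfies the first condition $|S_A|=|S_B|=k$, then, since $A$ and $B$ are disjoint, $S$ is the disjoint union of the $k$-subset $S_A\subseteq A$ and the $k$-subset $S_B\subseteq B$, so $S\mapsto(S_A,S_B)$ is a bijection between such sets $S$ and vertices $(g,h)$ of the Xor product, with inverse sending $(g,h)$ to $g\cup h$. For a family $\mathcal S$ of such sets, define $\Phi(\mathcal S):=\{(S_A,S_B):S\in\mathcal S\}$. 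By the previous sentence $\Phi$ is a bijection between families of subsets $S\subseteq A\cup B$ with $|S_A|=|S_B|=k$ and subsets of $V(KG(N,k)\cdot KG(N,k))$, and obviously $|\Phi(\mathcal S)|=|\mathcal S|$.

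It remains to check that $\mathcal S$ is semiintersecting if and only if $\Phi(\mathcal S)$ is a clique. Take distinct $S,T\in\mathcal S$ and put $g=S_A$, $g'=T_A$, $h=S_B$, $h'=T_B$, so $\Phi$ sends $S,T$ to the distinct vertices $(g,h),(g',h')$. By the definition of the Kneser graph, $gg'\in E(KG(N,k))$ iff $g\cap g'=\emptyset$, i.e. iff $S_A\cap T_A=\emptyset$, and likewise $hh'\in E(KG(N,k))$ iff $S_B\cap T_B=\emptyset$. Hence $(g,h)$ and $(g',h')$ are adjacent in the Xor product -- meaning, by definition, that exactly one of $gg'\in E$ and $hh'\in E$ holds -- exactly when exactly one of $S_A\cap T_A=\emptyset$ and $S_B\cap T_B=\emptyset$ holds, which is precisely the condition $S_A\cap T_A=\emptyset\iff S_B\cap T_B\neq\emptyset$ from the definition of a semiintersecting family. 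Since this holds for every pair of distinct sets, $\mathcal S$ is semiintersecting iff every two distinct vertices of $\Phi(\mathcal S)$ are adjacent, i.e. iff $\Phi(\mathcal S)$ is a clique. Therefore $\Phi$ restricts to a cardinality-preserving bijection from semiintersecting families with constants $k$ and $N$ to cliques of $KG(N,k)\cdot KG(N,k)$, and taking maxima gives $f(k,N)=\omega(KG(N,k)\cdot KG(N,k))$.

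I do not expect any genuine difficulty here; the whole argument is an unwinding of definitions. The only point that needs a moment's care is the direction of the equivalence: adjacency in $KG(N,k)$ records \emph{disjointness} of the two $k$-sets, so the ``exactly one edge'' requirement of the Xor product turns into ``the two sets are disjoint on one side iff they meet on the other'', which is exactly why the second bullet in the definition of semiintersecting is phrased as an ``if and only if'' rather than as a plain intersection condition.
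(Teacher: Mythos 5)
Your proof is correct and takes essentially the same route as the paper: identify a set $S$ with $|S_A|=|S_B|=k$ with the vertex $(S_A,S_B)$ of the Xor product, observe that adjacency in each Kneser factor records disjointness, and check that the ``exactly one edge'' condition of the Xor product matches the ``intersect on exactly one side'' condition of a semiintersecting family. The paper phrases the correspondence in the direction from vertices to sets ($(u_1,u_2)\mapsto U_1\cup U_2$) while you go the other way, but this is the same bijection and the same definitional unwinding.
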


\begin{proof}
The proof is actually really easy if we understand the definitions.

Let $G_1=(V_1, E_1)$ and $G_2=(V_2, E_2)$ be two $KG(N,k)$ Kneser graphs, and let $A$ and $B$ be disjoint $N$-element sets corresponding to $G_1$ and $G_2$ respectively. Let $u=(u_1, u_2)$ and $w=(w_1, w_2)$ be two vertices of $G_1 \cdot G_2$. Denote the corresponding $k$-element sets of $u_1$, $u_2$, $w_1$, $w_2$ by $U_1$, $U_2$, $W_1$, $W_2$ respectively ($U_1, W_1 \subset A$, $U_2, W_2 \subset B$). $u$ and $v$ are connected in the product if one of the following occurs.
\begin{itemize}
    \item $u_1w_1 \in E_1$ and $u_2w_2 \notin E_2$, which is equivalent with $U_1 \cap W_1 = \emptyset$ and $U_2 \cap W_2 \neq \emptyset$.
    \item $u_1w_1 \notin E_1$ and $u_2w_2 \in E_2$, which is equivalent with $U_1 \cap W_1 \neq \emptyset$ and $U_2 \cap W_2 = \emptyset$.
\end{itemize}

So if we assign to each vertex $(u_1, u_2)$ of $G_1 \cdot G_2$ the set $U_1 \cup U_2$ then a clique in the graph corresponds to a semiintersecting family on $A \cup B$, and vice versa, we assigned a clique to each semiintersecting family on $A \cup B$. The proof is finished.
\end{proof}

%So the question briefly is the following.
Therefore our main question can be stated as follows.

\begin{?}
What is the value of $f(k,N)$?
\end{?}

Similar questions have been studied in extremal combinatorics about families of sets. Most notable among these would be the Erdős-Ko-Rado theorem \cite{zbMATH03162924}, which determines the size of the maximal uniform intersecting family of sets (a family of sets such that each set has the same size, and has a non-empty intersection with all other sets in the family).
In \cite{zbMATH03049937} this question has been studied for two part set systems. Here the maximal cardinality of such an intersecting family of sets is studied, where the base set is divided into two parts, and each set in the family has fixed size intersections with both parts. Other similar questions have been studied about such 2-part families of sets in \cite{Gerbner}. 

The connection of our problem to intersecting families is tighter than the definitions suggest. The proofs of the two main theorems, Theorem~\ref{f(2,N)} and Theorem~\ref{thm}, are based on the fact that in a sense the structure of a $k$-uniform large family with a lot of intersections is similar to the structure of an intersecting family, and an intersecting family cannot be too complicated.

The structure of intersecting families is widely studied in extremal combinatorics. One of the most famous results is due to Hilton and Milner \cite{hilton1967some} who gave an upper bound for the size of a non-trivially intersecting family. In \cite{balogh2015intersecting} the typical structure of an intersecting family is studied, and proved that assuming some conditions, almost all intersecting $k$-uniform families on $n$ element are trivial. This result has been improved since in several articles, the best known result is \cite{balogh2021intersecting} (preprint), where the authors only need the condition $n \geq 2k + 100 \ \text{log}(k)$. One can read more about the structure of intersecting families for example in this \cite{katona2020results} survey.

In \cite{gerbner2012almost} the maximal cardinality of almost intersecting families are examined, that is families with every set intersecting all other except at most $l$. This problem is closely related to the idea behind the proof of our main theorem.

In our article we only examine the case when the two Kneser graphs are isomorphic. 
The results of our article can be generalized to products of different Kneser graphs, though the proofs may be more technical.

%Now we move on to some generalizations. The most natural problem is the value of the clique number if we multiply different Kneser graphs. So in the language of extremal set theory we are given $4$ constants, $N_1, N_2, k_1, k_2$ and we assume that $|A|=N_1$, $|B|=N_2$ and all sets $S$ from a semiintersecting family $\mathcal{S}$ satisfy that  $|S \cap A|=k_1$ and $|S \cap B|=k_2$. The other conditions remain the same. The question remains the cardinality of the maximal semiintersecting family. Most of our results hold in some form in this general case, though their proofs are more technical.

\subsection{Structure of the article}

In the next subsections we talk about notations and preliminaries. We introduce the notations we use throughout the article and we state some definitions and theorems we will need in later chapters. Then, we study the maximal cliques in conventional products of graphs.

In chapter~\ref{upper} we talk about upper bounds. First we give two fairly easy bounds that work in every case. Then we solve the problem with up to constant deviation for large $N$, and give a sharp bound for the case of $k=2$ and large $N$.

In chapter~\ref{lower} we work on lower bounds. This mainly consists of giving as good constructions as we can. We find connections with other topics in combinatorics and algebra and use them to give good (sharp in a lot of cases) constructions, and we give a better lower bound for large $N$ than the trivial.

In chapter~\ref{summary} we compare the bounds from chapter~\ref{upper} and~\ref{lower} and summarize our results.

In chapter~\ref{open problems} we give some open problems.

\subsection{Notations} 

If we do not imply otherwise, we always use the constants $k$ and $N$ and base set $A \cup B$ for semiintersecting families. For easier notation we have also introduced $S_A=S \cap A$ and $S_B=S \cap B$ for any set $S \subset A \cup B$ in Definition~\ref{S_A}.

We call $1$-dimensional affine subspaces $\textit{affine lines}$ as defined in the statement of Proposition~\ref{prop}. We partition the affine lines of a $2$-dimensional vector space into $\textit{parallel classes}$ (two of the lines are in the same parallel class if their corresponding linear subspaces are the same, see Definition~\ref{affine} and the paragraphs after it).

We always use Calligraphic letters to refer to families of sets or parallel classes of affine lines. We use capital letters to denote sets and lowercase letters for elements in sets.  

$\mathsf{MOLS}(n)$ is the maximum number of mutually orthogonal Latin squares, defined in Definition~\ref{MOLSdef}. 

$\pi(x)$ denotes the number of primes not greater than $x$.

$\sim$ denotes the asymptotic equivalence of functions. That is, if given two functions, both defined on the positive reals or on the positive integers, then we say $f(x) \sim g(x)$ if and only if
$$\lim_{x \to \infty} \frac{f(x)}{g(x)}=1.$$
We will also use the little-o notation. For two functions, $f(x)$ and $g(x)$ defined on the positive reals, we say that $f(x)=o(g(x))$ if for every $\varepsilon>0$ there exists a $K$, with 
$$|f(x)|<\varepsilon g(x)$$
for all $x>K$. With $o(g(x))$ we denote the set of all functions with the above property. We say that $o(g(x))=o(h(x))$ if the set of functions corresponding to $o(g(x))$ and $o(h(x))$ are the same. 
\subsection{Preliminaries}

In our constructions in chapter~\ref{lower} we will use certain definitions from algebra, combinatorics and number theory. We will give a brief introduction to them along with the applied theorems in this subsection.

We will use four well-known mathematical definitions and related theorems: finite fields, finite projective geometries, mutually orthogonal Latin squares and affine subspaces.

We will not state the definition of a field, as we consider it well-known. A finite field is a field with a finite number of elements. We will use the following famous theorem.

\begin{Th} \label{finite fields}
There exists a finite field with $n$ elements, exactly if $n$ is a prime power.
\end{Th}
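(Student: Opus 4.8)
The plan is to prove the two implications separately. For the ``only if'' direction, suppose $\mathbb{F}$ is a finite field of order $n$. First I would locate its prime subfield: the unique ring homomorphism $\mathbb{Z} \to \mathbb{F}$ has kernel $(m)$ with $m > 0$ (since $\mathbb{F}$ is finite), and $m$ must be prime because $\mathbb{F}$ has no zero divisors; call it $p$. Hence $\mathbb{F}$ contains a copy of $\mathbb{F}_p \cong \mathbb{Z}/p\mathbb{Z}$, so $\mathbb{F}$ is a finite-dimensional $\mathbb{F}_p$-vector space, say of dimension $d \geq 1$, and counting elements via a basis yields $n = p^d$, a prime power.

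For the ``if'' direction, fix a prime power $q = p^d$ and let $K$ be a splitting field of $g(x) = x^q - x$ over $\mathbb{F}_p$. I would then check that the set $R \subseteq K$ of roots of $g$ is a subfield of $K$ of size exactly $q$. For the size: since $g'(x) = -1$, we get $\gcd(g, g') = 1$, so $g$ is separable and has $q$ distinct roots, i.e. $|R| = q$. For the subfield property: $R$ visibly contains $0$ and $1$; the nonzero elements of $R$ are exactly the solutions of $x^{q-1} = 1$ and hence form a group under multiplication; and closure under addition and subtraction follows from the fact that in characteristic $p$ the $d$-fold Frobenius map $x \mapsto x^{p^d} = x^q$ is a ring endomorphism, so $(a \pm b)^q = a^q \pm b^q = a \pm b$ whenever $a, b \in R$.

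The only genuinely delicate points are the separability argument (needed to guarantee $q$ \emph{distinct} roots) and the Frobenius computation (needed for additive closure); everything else is elementary ring theory and linear algebra. An alternative route for the ``if'' direction is to exhibit an irreducible polynomial of degree $d$ over $\mathbb{F}_p$ and pass to the quotient ring $\mathbb{F}_p[x]/(f)$, but proving that such an $f$ exists is cleanest through the same splitting-field picture, so I would prefer the construction above. Since the statement as used here does not assert uniqueness, I would not prove that any two fields of order $q$ are isomorphic, although this too follows from the splitting-field description.
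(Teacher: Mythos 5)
The paper does not prove this theorem; it simply states it as a known result and points the reader to a reference (Lidl--Niederreiter) for background on finite fields. So there is no in-paper proof to compare against. Your argument is the standard textbook proof and it is correct: for necessity you extract the prime subfield $\mathbb{F}_p$ via the characteristic and count $|\mathbb{F}| = p^d$ by linear algebra; for sufficiency you build the field as the set of roots of $x^q - x$ in a splitting field, using $\gcd(g, g') = 1$ for separability and the $d$-fold Frobenius for additive closure. One tiny nit: for closure of $R$ under multiplication it is cleaner to note directly that $(ab)^q = a^q b^q = ab$, which handles the case where one factor is $0$ without a separate remark, but your route through the group $\{x : x^{q-1} = 1\}$ together with the explicit observation that $0 \in R$ is equally valid.
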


\cite{Lid} contains a detailed introduction to finite fields.

We also consider finite projective geometries well-known. A good introduction to the topic can be read here \cite{Alb}. We will require the following known theorem.

\begin{Th} \label{finite projective}
If $n$ is a prime power then there exists a finite projective plane of order $n$.
\end{Th}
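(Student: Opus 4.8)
The plan is to construct the required plane explicitly as the projective plane $PG(2,n)$ coordinatised by the finite field with $n$ elements. Since $n$ is a prime power, Theorem~\ref{finite fields} supplies a field $\F_n$ with $|\F_n|=n$; I would then work inside the $3$-dimensional vector space $\F_n^3$. Declare the \emph{points} of the incidence structure to be the $1$-dimensional linear subspaces of $\F_n^3$, the \emph{lines} to be the $2$-dimensional linear subspaces, and incidence to be set-theoretic containment. (Equivalently one may describe points by homogeneous coordinates $[x:y:z]$ up to scaling and lines by linear equations $ax+by+cz=0$, but the subspace formulation makes the verifications shortest.)

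Next I would check the axioms of a projective plane for this structure. Two distinct points are two distinct lines through the origin, and they span a unique $2$-dimensional subspace, so there is exactly one line incident with both. Dually, two distinct $2$-dimensional subspaces of a $3$-dimensional space meet in a subspace of dimension $2+2-3=1$, i.e. in exactly one point. For the non-degeneracy axiom I would exhibit an explicit quadrangle: the subspaces spanned by $e_1,e_2,e_3$ and by $e_1+e_2+e_3$ are four points, and any three of these four vectors are linearly independent, so no three of the points are collinear. Finally, for the order: the number of $1$-dimensional subspaces of $\F_n^3$ is $(n^3-1)/(n-1)=n^2+n+1$, and each line, being a copy of $\F_n^2$, contains $(n^2-1)/(n-1)=n+1$ points; a plane with $n^2+n+1$ points in which every line carries $n+1$ points is by definition of order $n$.

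I do not expect a genuine obstacle here: once $\F_n$ is in hand the entire argument is elementary linear algebra over a finite field, and the counting identities $n^2+n+1$ and $n+1$ are immediate from the formula for the number of subspaces of a vector space over $\F_n$. The only step that needs a small amount of care rather than pure counting is the non-degeneracy axiom, where one must actually produce the quadrangle $\{e_1,e_2,e_3,e_1+e_2+e_3\}$ and note its three-by-three independence, so that the constructed structure is a bona fide projective plane and not a degenerate one.
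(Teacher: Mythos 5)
Your construction is correct: it is the standard coordinatisation of $PG(2,n)$ by the $1$- and $2$-dimensional subspaces of $\F_n^3$, and each of the verifications (unique joining line via spans, unique meeting point via the dimension formula $2+2-3=1$, the quadrangle $\{e_1,e_2,e_3,e_1+e_2+e_3\}$ for non-degeneracy, and the counts $n^2+n+1$ and $n+1$) is carried out properly. Note, however, that the paper does not prove this theorem at all; it is stated as background and delegated to a reference on finite projective planes, so there is no ``paper's proof'' to compare against. Your proposal simply supplies the standard argument that the cited source would contain, and it does so correctly and self-containedly, leaning only on the earlier Theorem~\ref{finite fields} for the existence of $\F_n$.
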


\begin{Def}[Mutually orthogonal Latin squares] \label{MOLSdef}
A Latin square is an $n \times n$ square filled with $n$ different symbols, each occurring exactly once in every row and column. Two Latin squares are said to be orthogonal if, considering the two filling as an ordered pair in every entry of the table, every ordered pair occurs exactly once in the square. A family of Latin squares are mutually orthogonal if every pair of them are orthogonal.

We use the usual $\mathsf{MOLS}(n)$ notation for the maximum number of mutually orthogonal Latin squares of size $n \times n$.
\end{Def}

 Mutually orthogonal Latin squares are a widely studied topic in combinatorics. In \cite{Col} the authors wrote a brief survey about the best known constructions.  We will also need a related theorem.

\begin{Th} \label{projective}
There exist $n-1$ mutually orthogonal Latin squares of size $n$ if and only if there exists a finite projective plane of order $n$. 
\end{Th}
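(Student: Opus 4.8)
The plan is to route the equivalence through the classical dictionary between mutually orthogonal Latin squares and affine planes of order $n$, using the standard fact that an affine plane of order $n$ and a projective plane of order $n$ determine each other: deleting one line of a projective plane of order $n$ together with its $n+1$ points gives an affine plane of order $n$, and adjoining to an affine plane of order $n$ one new point for each parallel class of lines (incident to all lines of that class) plus one new ``line at infinity'' through exactly the new points recovers a projective plane of order $n$ (see~\cite{Alb}). So it is enough to prove that $n-1$ MOLS of order $n$ exist if and only if an affine plane of order $n$ exists.

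For the direction from a plane to the squares, I would start from a projective plane of order $n$, delete a line to get an affine plane on $n^2$ points, and observe that the $n+1$ deleted points yield $n+1$ \emph{parallel classes} of affine lines, any two lines from different classes meeting in a unique affine point. Fixing two classes $\m{R}=\{r_1,\dots,r_n\}$ and $\m{C}=\{c_1,\dots,c_n\}$ as coordinates, each affine point gets a label $(i,j)$ via $r_i\cap c_j$, and this labelling is a bijection. For each remaining class $\m{P}_t=\{p_{t,1},\dots,p_{t,n}\}$ set $L_t(i,j)=s$ when $(i,j)\in p_{t,s}$. Since $r_i$ meets each line of $\m{P}_t$ exactly once, each symbol occurs once per row of $L_t$, and similarly per column, so $L_t$ is a Latin square; and for $s\neq t$ the lines $p_{s,a}$ and $p_{t,b}$ meet in a unique affine point, giving the unique cell with $L_s=a$ and $L_t=b$, so $L_s\perp L_t$. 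This yields $n-1$ MOLS of order $n$.

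For the converse, given MOLS $L_1,\dots,L_{n-1}$ on symbols $\{1,\dots,n\}$, I would take the $n^2$ points $(i,j)$ and attach to each the tuple $(i,\,j,\,L_1(i,j),\dots,L_{n-1}(i,j))$, and declare the lines to be the $n$ rows, the $n$ columns, and for each $t$ and symbol $s$ the block $\{(i,j):L_t(i,j)=s\}$ --- in total $n^2+n$ lines of size $n$, falling into $n+1$ parallel classes. The crux is that any two distinct points share \emph{at most one} coordinate of their tuples (sharing the first two forces equality; sharing coordinate $1$ or $2$ with some later coordinate repeats a symbol in a row or column of some $L_t$; sharing two later coordinates $2+s,2+t$ repeats an ordered pair, contradicting $L_s\perp L_t$), and then a double count upgrades this to \emph{exactly one} shared coordinate for points differing in the first two: a fixed point shares coordinate $2+t$ with exactly $n-1$ of the $(n-1)^2$ points differing from it in both first coordinates, and these sets are pairwise disjoint over $t$, hence exhaustive. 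This gives uniqueness of the line through two points; the parallel-class structure gives Playfair's axiom and non-degeneracy for $n\geq2$, so we obtain an affine plane of order $n$, which extends to a projective plane of order $n$ as above.

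The routine half is the plane-to-squares direction, which is essentially bookkeeping with parallel classes. I expect the main obstacle to be the converse --- checking that the constructed incidence structure actually satisfies the affine-plane axioms, and in particular the double-counting step that turns ``at most one common coordinate'' into ``exactly one'' for points in general position, since that is exactly where having the \emph{full} set of $n-1$ squares (not fewer) is essential. The affine--projective passage and the degenerate small cases are a minor but necessary chore.
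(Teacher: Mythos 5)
The paper does not actually prove Theorem~\ref{projective}; it is stated as a classical fact (with the surrounding references to \cite{Col} and \cite{Alb} as background on MOLS and finite projective planes) and then used as a black box. So there is no argument in the paper to compare yours against. Your proof is the standard textbook correspondence: pass between projective and affine planes of order $n$ by deleting/adjoining the line at infinity, use two parallel classes as coordinates so that each remaining class encodes a Latin square with cross-class unique intersection giving orthogonality, and in the converse build the $n^2+n$ lines in $n+1$ parallel classes and verify the affine axioms. The content checks out: the ``at most one shared coordinate'' case analysis is correct, and the counting step (each of the $n-1$ squares contributes exactly $n-1$ points sharing that coordinate with a fixed $P$ among the $(n-1)^2$ points off $P$'s row and column, pairwise disjointly, hence a partition) is exactly where the full complement of $n-1$ squares is used to upgrade ``at most one'' to ``exactly one'' common line. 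As you note, the affine-to-projective extension and the non-degeneracy check for $n\geq 2$ are routine. This is the standard proof of the classical result the paper invokes.
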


%Affine subspaces
Finally, we define affine subspaces in a vector space.

\begin{Def}[Affine subspace] \label{affine}
Let $V$ be a vector space. Any subset of the form $x+U$, where $x$ is a vector in $V$ and $U$ is a linear subsapce of $V$, is referred to as an \it{affine subspace} of $V$.
\end{Def}

We will refer to the set of affine subspaces corresponding to a certain linear subspace as a parallel class. The elements of such a class are pairwise disjoint.

We will use these definitions only when the vector space is two dimensional. In this case we can state the following useful property.
\begin{Prop} \label{prop}
In a two dimensional vector space $V$, every two affine subspaces corresponding to different 1-dimensional linear subspaces (which we will call affine lines) have exactly one vector as their intersection. 
\end{Prop}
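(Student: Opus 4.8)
The plan is to reduce the statement to the elementary fact that two distinct $1$-dimensional subspaces of a $2$-dimensional vector space are complementary, and then to rewrite the condition ``$v$ lies in both affine lines'' as a single linear equation whose solution set is a single point.

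First I would fix notation: write the two affine lines as $\ell_1 = x_1 + U_1$ and $\ell_2 = x_2 + U_2$, where $U_1, U_2$ are $1$-dimensional linear subspaces of $V$ with $U_1 \neq U_2$. Since $U_1 \cap U_2$ is a subspace of the $1$-dimensional space $U_1$, it is either $\{0\}$ or all of $U_1$; the latter would force $U_1 \subseteq U_2$ and hence $U_1 = U_2$ by equality of dimensions, contradicting our hypothesis, so $U_1 \cap U_2 = \{0\}$. Counting dimensions, $\dim(U_1 + U_2) = \dim U_1 + \dim U_2 - \dim(U_1 \cap U_2) = 2 = \dim V$, so $U_1 + U_2 = V$, i.e. $V = U_1 \oplus U_2$.

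Next I would analyze the intersection. A vector $v$ lies in $\ell_1 \cap \ell_2$ exactly when $v = x_1 + u_1 = x_2 + u_2$ for some $u_1 \in U_1$ and $u_2 \in U_2$, which rearranges to $u_2 - u_1 = x_1 - x_2$. Using the direct sum decomposition $V = U_1 \oplus U_2$, write $x_1 - x_2 = p + q$ with $p \in U_1$ and $q \in U_2$ uniquely determined; since $-u_1 \in U_1$ and $u_2 \in U_2$, uniqueness of this decomposition forces $u_1 = -p$ and $u_2 = q$. Hence the only candidate is $v = x_1 - p$, and substituting back shows that this $v$ indeed lies in both $\ell_1$ and $\ell_2$. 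Therefore $\ell_1 \cap \ell_2$ consists of exactly one vector.

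There is essentially no obstacle here; the only point deserving care is keeping the two halves of ``exactly one'' separate. Nonemptiness of $\ell_1 \cap \ell_2$ is precisely where $U_1 + U_2 = V$ is used (so that $x_1 - x_2$ can be split as $p + q$), while uniqueness only needs $U_1 \cap U_2 = \{0\}$; I would make both uses explicit.
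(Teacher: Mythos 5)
Your proof is correct and rests on the same key fact as the paper's: two distinct $1$-dimensional subspaces of a $2$-dimensional space are complementary, so $V = U_1 \oplus U_2$. The paper phrases this by decomposing the translation vectors in a basis $\{x,y\}$ with $x \in L_1$, $y \in L_2$ and normalizing both lines to translates of $L_1, L_2$ by the common vector $a_2 x + b_1 y$, while you solve the intersection equation directly via the unique decomposition of $x_1 - x_2$; the underlying mechanism is identical.
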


\begin{proof}
Let two affine lines be $U_1=L_1+p_1$ and $U_2=L_2+p_2$ where $L_1$ and $L_2$ are  two linear subspaces. First, let us observe that any two non-zero vectors from the two different linear subspaces will be independent.  Let us choose two non-zero vectors $x\in L_1, y\in L_2$. As these are independent, we can express any vector as the linear combination of these two vectors, thus $U_1=L_1+a_1x+b_1y=L_1+b_1y$ and $U_2=L_2+a_2x+b_2y=L_2+a_2x$. It follows that if we translate the original linear subspaces $L_1$ and $L_2$ with $a_2x+b_1y$ then we will get the affine lines $U_1$ and $U_2$ respectively, as $x\in L_1$ and $y\in L_2$. The linear subspaces had only the origin as their intersection, therefore when they are translated they will have only $a_2x+b_1y$ as their intersection, thus we have proved our claim.
\end{proof}

At the end of Chapter~\ref{lower} we will use the famous prime number theorem.

\begin{Th}[Prime number theorem] \label{pnt}
Denote by $\pi(x)$ the number of primes not greater than $x$. (We can consider this function defined on the positive reals.) Then
$$\pi(k)=\frac{k}{\log(k)}+o\left(\frac{k}{\log(k)}\right).$$
\end{Th}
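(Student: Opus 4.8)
The statement is the Prime Number Theorem, a classical and deep result; we do not reprove it here from first principles, but we sketch the standard analytic argument (the elementary Erd\H{o}s--Selberg route via Selberg's symmetry formula is an alternative).

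The plan is first to reduce the asymptotic for $\pi(k)$ to one for the Chebyshev function $\psi(x)=\sum_{n\le x}\Lambda(n)$, where $\Lambda$ is the von Mangoldt function ($\Lambda(n)=\log p$ if $n=p^m$ for some prime $p$ and $m\ge1$, and $\Lambda(n)=0$ otherwise). By Abel summation one checks that $\psi(x)\sim x$ is equivalent to $\pi(x)=\frac{x}{\log x}+o\!\left(\frac{x}{\log x}\right)$, so it suffices to prove $\psi(x)\sim x$.

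The engine is the Riemann zeta function $\zeta(s)=\sum_{n\ge1}n^{-s}=\prod_p(1-p^{-s})^{-1}$ for $\Re(s)>1$. Logarithmic differentiation of the Euler product gives $-\zeta'(s)/\zeta(s)=\sum_{n\ge1}\Lambda(n)n^{-s}$, so $\psi$ is exactly the summatory function attached to the Dirichlet series $-\zeta'/\zeta$. One then establishes: (i) $\zeta$ continues meromorphically to a neighbourhood of the closed half-plane $\Re(s)\ge1$ with a single simple pole at $s=1$ of residue $1$; and (ii) $\zeta(1+it)\ne0$ for every real $t\ne0$. Granting (i) and (ii), a Tauberian theorem of Wiener--Ikehara type --- or, concretely, Perron's formula applied to $-\zeta'/\zeta$ followed by shifting the contour slightly past the line $\Re(s)=1$ using polynomial growth bounds for $\zeta'/\zeta$ there --- yields $\psi(x)=x+o(x)$, which is what we want.

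The main obstacle is step (ii), the non-vanishing of $\zeta$ on the line $\Re(s)=1$; this is where the proof has genuine content rather than bookkeeping. The classical device is the elementary trigonometric inequality $3+4\cos\theta+\cos2\theta=2(1+\cos\theta)^2\ge0$: applying it to the quantity $3\log|\zeta(\sigma)|+4\log|\zeta(\sigma+it)|+\log|\zeta(\sigma+2it)|\ge0$ and letting $\sigma\to1^{+}$ shows that a hypothetical zero of $\zeta$ at $1+it$ would be incompatible with the simple pole of $\zeta$ at $s=1$. All remaining ingredients are routine complex analysis and elementary partial summation; full treatments can be found in any standard textbook on analytic number theory.
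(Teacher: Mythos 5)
The paper does not prove this statement at all: it is the classical Prime Number Theorem, stated as a known result (with a pointer to the standard literature) purely so that it can be invoked in Corollary~\ref{k^2}. Your sketch of the analytic proof --- reducing $\pi(x)\sim x/\log x$ to $\psi(x)\sim x$ by partial summation, using $-\zeta'/\zeta=\sum\Lambda(n)n^{-s}$, and deducing $\psi(x)\sim x$ from the meromorphic continuation of $\zeta$ past $\Re(s)=1$, the non-vanishing $\zeta(1+it)\neq 0$ via the $3+4\cos\theta+\cos 2\theta\ge 0$ trick, and a Tauberian/contour-shift argument --- is the correct standard outline, and nothing in it is wrong. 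But be aware that you are supplying a proof where the paper deliberately gives none; for the purposes of this paper the right move is simply to cite the theorem, as the authors do, since reproving it adds nothing to their argument and a genuine proof is far beyond the scope of the text (your write-up is, appropriately, only a roadmap and would need substantial filling in to be a self-contained proof).
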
 

\subsection{Other graph products} \label{kneser}
Our main problem can be stated as the calculation of the clique number in the Xor product of Kneser graphs. In this section we examine the clique number in other graph products.
First we define the three most commonly used graph products. In all three definitions $G_1$ and $G_2$ are simple graphs.
\begin{Def}[Cartesian product]
Let $G_1\square G_2$ be the graph on the vertex set $V_1\times V_2$, where $V_1\times V_2$ is the Cartesian product of the vertex sets of the original graphs, and let $(g,h)$ and $(g',h')$ be connected, if either $g=g'$ and $h$ and $h'$ are connected in $G_2$, or $g$ and $g'$ are connected in $G_1$ and $h=h'$. 
\end{Def}
\begin{Def}[Tensor product]
Let $G_1\times G_2$ be the graph on the vertex set $V_1\times V_2$, and let $(g,h)$ and $(g',h')$ be connected, if both $g$ and $g'$ are connected in $G_1$ and $h$ and $h'$ are connected in $G_2$. Note that no vertex is connected with itself in $G_1$ and $G_2$. 
\end{Def}
\begin{Def}[Strong product]
Let $G_1\boxtimes G_2$ be the graph on the vertex set $V_1\times V_2$, and let $(g,h)$ and $(g',h')$ be connected, if either $g=g'$ and $h$ and $h'$ are connected in $G_2$, or $g$ and $g'$ are connected in $G_1$ and $h=h'$, or both $g$ and $g'$ are connected in $G_1$ and $h$ and $h'$ are connected in $G_2$. This is the union of the previous two graph products.
\end{Def}

In the case of these graph products the calculation of the clique number is trivial not only in the case of Kneser graphs, but in the general case as well.
The following well known equivalences describe the clique number in the canonical graph products.
\begin{Prop}
$\omega(G\square G)=\omega(G)$
\end{Prop}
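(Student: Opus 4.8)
The plan is to prove the two inequalities $\omega(G\square G)\le\omega(G)$ and $\omega(G\square G)\ge\omega(G)$ separately, the second being immediate and the first requiring a short structural observation about the Cartesian product.

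\textbf{Lower bound.} For the inequality $\omega(G\square G)\ge\omega(G)$, I would take a maximum clique $K=\{v_1,\dots,v_r\}$ in $G$ with $r=\omega(G)$ and a fixed vertex $w\in V(G)$, and consider the set $\{(v_1,w),\dots,(v_r,w)\}$ in $G\square G$. For $i\ne j$ we have $v_i v_j\in E(G)$ and the second coordinates agree, so by the definition of the Cartesian product $(v_i,w)$ and $(v_j,w)$ are adjacent; hence this is a clique of size $r$. (Of course a ``vertical'' copy $\{(w,v_i)\}$ works equally well.)

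\textbf{Upper bound.} For $\omega(G\square G)\le\omega(G)$, let $C\subseteq V(G)\times V(G)$ be any clique in $G\square G$. The key observation is that two adjacent vertices of $G\square G$ agree in exactly one coordinate; in particular, no two distinct vertices of $C$ can differ in both coordinates, and no two distinct vertices of $C$ can agree in both coordinates (that would force a loop). I would first argue that either all elements of $C$ share the same first coordinate, or all share the same second coordinate: if $(a,b),(a,b')\in C$ are two vertices sharing the first coordinate with $b\ne b'$, then for any third vertex $(x,y)\in C$, adjacency to $(a,b)$ and to $(a,b')$ forces — since $(x,y)$ cannot agree with $(a,b)$ in the second coordinate and with $(a,b')$ in the second coordinate simultaneously unless $b=b'$ — that $x=a$. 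Thus once two vertices of $C$ share (say) a first coordinate, every vertex of $C$ has that same first coordinate. If instead no two vertices of $C$ share a first coordinate, then (since adjacent vertices agree in one coordinate) every pair shares the second coordinate, so all of $C$ has a common second coordinate. In the first case, writing $C=\{(a,y_1),\dots,(a,y_m)\}$ with the $y_i$ distinct, adjacency of $(a,y_i)$ and $(a,y_j)$ for $i\ne j$ forces $y_iy_j\in E(G)$ by the definition of the Cartesian product, so $\{y_1,\dots,y_m\}$ is a clique in $G$ and $|C|=m\le\omega(G)$; the second case is symmetric. Combining the two bounds gives $\omega(G\square G)=\omega(G)$.

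\textbf{Main obstacle.} There is no real obstacle here; the only point requiring a little care is the case analysis showing that a clique in $G\square G$ is ``axis-aligned'' (all first coordinates equal or all second coordinates equal). The small subtlety is to remember that a clique cannot contain two identical vertices and that a vertex is never adjacent to itself, which is what rules out two vertices of $C$ agreeing in both coordinates and forces the dichotomy.
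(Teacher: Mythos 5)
Your proof is correct and follows essentially the same approach as the paper: show that any clique in $G\square G$ must be ``axis-aligned'' (all first or all second coordinates equal), hence projects to a clique in $G$, and exhibit a clique of size $\omega(G)$ by fixing one coordinate. The only difference is that you spell out the dichotomy (shared first coordinate versus shared second coordinate) in detail, whereas the paper disposes of it with a brief ``without loss of generality.''
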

\begin{proof}
Take a clique from $G\square G$ with $\omega(G)$ vertices. Without the loss of generality we can suppose that $(v,u_1)$ and $(v,u_2)$ are part of this maximal clique. The only points connected to both of these are the ones having $v$ as their first coordinate. Therefore all the elements in the clique have $v$ as their first coordinate, so in the other coordinate they must form a clique, thus $\omega(G\square G)\leq\omega(G)$.

Let us take a maximal clique $A$ from $G$. Then in $G\square G$ the following set forms a clique of size $\omega(G)$. Let us take all the vertices of the form $(v,a)$ for a fixed $v$, where $a\in A$. Therefore $\omega(G\square G)=\omega(G)$

\end{proof}

\begin{Prop}
$\omega(G\times G)=\omega(G)$
\end{Prop}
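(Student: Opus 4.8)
The plan is to mimic the structure of the proof just given for the Cartesian product. I would start by taking a maximal clique $K$ in $G \times G$ and showing it cannot be larger than $\omega(G)$, and then exhibiting a clique of size $\omega(G)$ to get equality.

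First I would prove the upper bound $\omega(G \times G) \le \omega(G)$. Suppose $K$ is a clique in $G \times G$. Consider the projection $\pi_1 \colon V_1 \times V_2 \to V_1$ onto the first coordinate. I claim $\pi_1$ is injective on $K$: if $(v,u_1)$ and $(v,u_2)$ were two distinct vertices of $K$, then by definition of the tensor product they would be adjacent only if $v$ is adjacent to $v$ in $G$, which is impossible since $G$ is simple (no loops). Hence the two coordinates of any two distinct vertices of $K$ differ in \emph{both} coordinates. Now take any two distinct vertices $(g,h),(g',h') \in K$; they are adjacent, so in particular $gg' \in E_1$, and since $\pi_1$ is injective the images $\pi_1(K)$ are pairwise distinct and pairwise adjacent in $G$, i.e.\ form a clique. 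Therefore $|K| = |\pi_1(K)| \le \omega(G)$.

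Next I would give the matching construction. Let $A \subseteq V(G)$ be a clique of size $\omega(G)$. Since every two distinct vertices of $A$ are adjacent in $G$, the diagonal set $\{(a,a) : a \in A\}$ is a clique in $G \times G$: for $a \ne a'$ in $A$ we have $aa' \in E_1$ and $aa' \in E_2$ (identifying the two copies of $G$), so $(a,a)$ and $(a',a')$ are adjacent by the definition of the tensor product. This clique has size $\omega(G)$, so $\omega(G \times G) \ge \omega(G)$, and combined with the upper bound we get $\omega(G \times G) = \omega(G)$.

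I do not anticipate any real obstacle here; the only subtlety — and the one place worth being careful — is remembering that tensor products of simple loopless graphs never identify two vertices sharing a coordinate, which is exactly what forces the first-coordinate projection of a clique to be injective. Once that observation is in place, both inequalities are immediate, exactly parallel to the Cartesian case.
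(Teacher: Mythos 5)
Your argument is correct and follows essentially the same route as the paper's proof: the injectivity of the first-coordinate projection on a clique (forced by the absence of loops) gives the upper bound, and the diagonal of a maximum clique gives the matching lower bound. The projection phrasing is a slightly more formal packaging of the same idea, but there is no substantive difference.
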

\begin{proof}
There are no two vertices connected of the form $(v,u_1)$ and $(v,u_2)$, as $v$ is not connected to itself. Therefore, in a clique every $v\in G$ can only appear once in the first coordinate. All of the $v\in G$ that appear in such a way should be connected, therefore $\omega(G\times G) \leq \omega(G)$.

This is sharp, as if we take a clique $A$ of size $\omega(G)$, and take the points $(v,v)$ where $v\in A$, we will get a suitably large clique.
\end{proof}

\begin{Prop}
$\omega(G\boxtimes G)=\omega(G)^2$
\end{Prop}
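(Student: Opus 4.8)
The plan is to reduce everything to one structural observation about adjacency in the strong product: two distinct vertices $(g,h)$ and $(g',h')$ of $G\boxtimes G$ are adjacent if and only if $g$ and $g'$ are either equal or adjacent in $G$, and simultaneously $h$ and $h'$ are either equal or adjacent in $G$. This is immediate from unpacking the three cases in the definition of $\boxtimes$: the first case gives $g=g'$ together with $h\sim h'$, the second gives $g\sim g'$ together with $h=h'$, and the third gives $g\sim g'$ together with $h\sim h'$; conversely any "equal-or-adjacent in each coordinate, but not equal in both" pair falls into one of these three cases. In particular, if $(g,h)\sim(g',h')$ and $g\neq g'$, then necessarily $g\sim g'$ in $G$, and symmetrically for the second coordinate.

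For the upper bound, I would take an arbitrary clique $C\subseteq V_1\times V_2$ in $G\boxtimes G$, let $A$ be the set of first coordinates occurring among the vertices of $C$, and let $B$ be the set of second coordinates. If $a,a'\in A$ are distinct, choose vertices $(a,h),(a',h')\in C$; since they are adjacent and differ in the first coordinate, the observation forces $a\sim a'$ in $G$. Hence $A$ is a clique of $G$, and likewise $B$ is a clique of $G$, so $|A|\le\omega(G)$ and $|B|\le\omega(G)$. Since $C\subseteq A\times B$, we get $|C|\le|A|\cdot|B|\le\omega(G)^2$.

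For the matching lower bound, fix a maximum clique $A$ of $G$ with $|A|=\omega(G)$ and consider the set $A\times A\subseteq V_1\times V_2$. For any two distinct vertices $(a,b),(a',b')\in A\times A$ we have $a=a'$ or $a\sim a'$, and $b=b'$ or $b\sim b'$, so by the observation they are adjacent in $G\boxtimes G$; thus $A\times A$ is a clique of size $\omega(G)^2$. Combining the two bounds yields $\omega(G\boxtimes G)=\omega(G)^2$. There is no real obstacle here: the only step that needs a moment's care is the case analysis in the structural observation, specifically verifying that distinct first (respectively second) coordinates appearing in a clique are genuinely adjacent in $G$ rather than merely unequal; the rest is bookkeeping.
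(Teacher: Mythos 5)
Your proof is correct and follows essentially the same approach as the paper: you establish that in a clique the set of first coordinates (and likewise second coordinates) must form a clique in $G$, hence has size at most $\omega(G)$, and you exhibit $A\times A$ for a maximum clique $A$ as the matching construction. Your write-up is simply a more carefully spelled-out version of the paper's argument, with the structural characterization of adjacency in $\boxtimes$ made explicit.
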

\begin{proof}
If we observe only the first coordinate in a clique in $G\boxtimes G$ we get that for each pair of vertices the first coordinate is either the same, or is connected. This means that the size of the set of possible first coordinates is at most $\omega(G)$. This is true for the second coordinate as well, therefore there are only $\omega(G)^2$ possibilities for an element in the clique.

We can construct a large enough clique by taking a maximal clique $A$ in $G$, and taking the clique consisting of the vertices $(u,v)$ where $u,v\in A.$
\end{proof}

\section{Upper bounds} \label{upper}

\noindent As stated in the introduction we start this section with $2$ simple lemmas, which give surprisingly strong bounds in some cases.

\begin{Lemma} \label{l1}
$$f(k,N)\leq 2k\left(\left\lfloor \frac{N}{k} \right\rfloor-1 \right)+1$$
\end{Lemma}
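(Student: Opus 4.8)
The plan is to fix one member of the family and bound how many of the others can ``attach'' to it through any single element on the $A$-side or the $B$-side. We may assume $\m{S}\neq\emptyset$, so pick $S\in\m{S}$ and write $S_A=\{a_1,\dots,a_k\}$ and $S_B=\{b_1,\dots,b_k\}$. For any $T\in\m{S}$ with $T\neq S$ the semiintersecting property says exactly one of $S_A\cap T_A$, $S_B\cap T_B$ is nonempty, so $T$ contains some $a_i$ in its $A$-part or some $b_j$ in its $B$-part. Accordingly set
$$\mathcal{T}_i=\{\,T\in\m{S}\setminus\{S\}\ :\ a_i\in T_A\,\},\qquad \mathcal{U}_j=\{\,T\in\m{S}\setminus\{S\}\ :\ b_j\in T_B\,\},$$
and observe that $\m{S}\setminus\{S\}$ is covered by the $2k$ sets $\mathcal{T}_1,\dots,\mathcal{T}_k,\mathcal{U}_1,\dots,\mathcal{U}_k$.

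The key step is the estimate $|\mathcal{T}_i|\le\left\lfloor N/k\right\rfloor-1$ (and symmetrically $|\mathcal{U}_j|\le\left\lfloor N/k\right\rfloor-1$). The point is a small observation: two members of $\m{S}$ that share an element on one side are forced to be disjoint on the other side. Concretely, if $T,T'\in\mathcal{T}_i$ are distinct then $a_i\in T_A\cap T_A'$, so $T_A\cap T_A'\neq\emptyset$, and hence $T_B\cap T_B'=\emptyset$ by the defining property; moreover each $T\in\mathcal{T}_i$ meets $S$ on the $A$-side (at $a_i$), so likewise $T_B\cap S_B=\emptyset$. Therefore $\{S_B\}\cup\{T_B:T\in\mathcal{T}_i\}$ is a family of pairwise disjoint $k$-element subsets of the $N$-element set $B$ (they are pairwise distinct, being pairwise disjoint and nonempty), so it has at most $\left\lfloor N/k\right\rfloor$ members; discarding $S_B$ gives $|\mathcal{T}_i|\le\left\lfloor N/k\right\rfloor-1$. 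Swapping the roles of $A$ and $B$ yields the same bound for each $\mathcal{U}_j$.

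Summing over the cover then finishes the argument:
$$|\m{S}|-1\ \le\ \sum_{i=1}^{k}|\mathcal{T}_i|+\sum_{j=1}^{k}|\mathcal{U}_j|\ \le\ 2k\left(\left\lfloor \frac{N}{k}\right\rfloor-1\right),$$
so $|\m{S}|\le 2k\bigl(\lfloor N/k\rfloor-1\bigr)+1$, as claimed. There is no real obstacle here; the only thing worth care is the bookkeeping trick of including the fixed set $S$ itself in the disjoint family $\{S_B\}\cup\{T_B\}$, which is what improves the naive bound $2k\lfloor N/k\rfloor+1$ to the stated one, and checking that the sets counted are genuinely distinct.
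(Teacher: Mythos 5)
Your proof is correct and follows essentially the same approach as the paper: fix a set $S$, cover $\m{S}\setminus\{S\}$ by the $2k$ ``pencils'' through the elements of $S_A$ and $S_B$, and bound each pencil by $\lfloor N/k\rfloor-1$ using the fact that sets sharing an element on one side are pairwise disjoint on the other. The paper states the key bound slightly differently (``for all $a\in A$ there are at most $\lfloor N/k\rfloor$ sets containing $a$''), but this is the same observation you make about $\{S_B\}\cup\{T_B:T\in\mathcal{T}_i\}$.
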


\begin{proof}
Let $\mathcal{S}$ be a semiintersecting family, and let $S \in \mathcal{S}$. We want to give an upper bound for the number of sets $T$ with $T \in \mathcal{S}$, $T \neq S$ and $S_A \cap T_A \neq \emptyset$. Let us denote the family of sets with these properties $\mathcal{T}$.

Note that for all $a \in A$ there are at most $\left\lfloor \frac{N}{k} \right\rfloor$ sets $H \in \mathcal{S}$ with $a \in H$, because any two such $H$ intersect in $A$, so they must be pairwise disjoint in $B$. Using this, for all $u \in S_A$ there are at most $\left\lfloor \frac{N}{k} \right\rfloor-1$ sets $T \in \mathcal{S}$ with $T \neq S$ and $u \in T$. $|S_A|=k$, and $u \in T$ for some $u\in S_A$ for all $T \in \mathcal{T}$, so 
$$|\mathcal{T|} \leq k\left(\left\lfloor \frac{N}{k} \right\rfloor-1 \right).$$
By the symmetry between $A$ and $B$ this is also an upper bound for the sets that intersect $S$ at $B$, and all sets from $\mathcal{S}$ must intersect $S$ either in $A$ or in $B$. The conclusion follows.
\end{proof}

\begin{Lemma} \label{l2}
$$f(k,N) \leq \left\lfloor \frac{N \left\lfloor \frac{N}{k} \right\rfloor}{k} \right\rfloor$$
\end{Lemma}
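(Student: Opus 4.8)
The plan is a straightforward double count of incidences between elements of $A$ and sets of the family, reusing the local bound already extracted in the proof of Lemma~\ref{l1}. Let $\mathcal{S}$ be a semiintersecting family with constants $k$ and $N$. First I would recall the key observation: for any fixed $a \in A$, all sets $H \in \mathcal{S}$ with $a \in H$ must be pairwise disjoint in $B$, since any two of them already intersect in $A$ (at $a$), so by the defining property of a semiintersecting family they cannot intersect in $B$. As each such $H$ satisfies $|H_B| = k$ and $|B| = N$, there are at most $\left\lfloor \frac{N}{k} \right\rfloor$ sets of $\mathcal{S}$ containing $a$.

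Next I would count the set of pairs $P = \{ (a,S) : a \in A,\ S \in \mathcal{S},\ a \in S \}$ in two ways. Summing over $a \in A$ and using the bound above gives $|P| \le N \left\lfloor \frac{N}{k} \right\rfloor$. Summing over $S \in \mathcal{S}$ and using $|S_A| = k$ gives $|P| = k\,|\mathcal{S}|$. Combining the two, $k\,|\mathcal{S}| \le N \left\lfloor \frac{N}{k} \right\rfloor$, hence $|\mathcal{S}| \le \frac{N \left\lfloor \frac{N}{k} \right\rfloor}{k}$. Since $|\mathcal{S}|$ is a nonnegative integer, we may take the floor of the right-hand side, which yields the claimed inequality. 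Taking the maximum over all semiintersecting families gives the bound on $f(k,N)$.

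There is no real obstacle here; the argument is routine once one has the "at most $\lfloor N/k \rfloor$ sets through a point" lemma, which is the same ingredient used in Lemma~\ref{l1}. The only minor point to be careful about is that passing to the floor on the right-hand side is legitimate precisely because the left-hand side is an integer, so no rounding is lost. It is perhaps worth a remark that this bound and the one in Lemma~\ref{l1} are genuinely different: Lemma~\ref{l1} is stronger when $N$ is large relative to $k$ (it is roughly linear in $N$), while Lemma~\ref{l2} is of order $N^2/k^2$ and so is never better for large $N$, but can be compared with the constructions in the regime $N \approx k^2$.
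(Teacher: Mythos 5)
Your proof is correct and follows essentially the same double-counting argument as the paper: count incidences $(a,S)$ with $a\in A$, $a\in S$, bounding them by $N\lfloor N/k\rfloor$ on one side and equating to $k|\mathcal{S}|$ on the other, using the same "at most $\lfloor N/k\rfloor$ sets through a point" observation from Lemma~\ref{l1}. No differences worth noting.
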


\begin{proof}
We will use double counting.

Let $\m{S}$ be a semiintersecting family. Using the same fact as in Lemma~\ref{l1}, for all $a \in A$ there can be at most $\lef \frac{N}{k} \rrr$ elements of $\m{S}$ that contain $a$. Therefore we can count the (ordered) pairs $(a,S)$, where $a \in A$, $S \in \m{S}$ and $a \in S$. On one hand for every $a$ there are at most $\lef \frac{N}{k} \rrr$ sets containing it, so there are at most $N\lef \frac{N}{k}\rrr$ pairs. On the other hand the number of pairs equals $|\m{S}|\cdot k$, since every $S \in \m{S}$ contains exactly $k$ elements in $A$.

Therefore we get$$|\m{S}| \leq \left\lfloor \frac{N \left\lfloor \frac{N}{k} \right\rfloor}{k} \right\rfloor.$$
\end{proof}

Now we turn to the case when $N$ is sufficiently large. We examine the $k=2$ case separately, because in this case we can calculate the exact value. Our next theorem is a sharp upper bound, and we will prove that this is indeed sharp in a slightly more general form in the next chapter.

In the next two theorems we will use the concept of Ramsey numbers. Ramsey's theorem \cite{ramsey1930problem} states that for all positive integers $a$ and $b$ there exist a smallest integer $R(a,b)$ (called Ramsey number) with the property that any simple graph with $R(a,b)$ vertices contains either a clique of size $a$ or an independent set of size $b$. From now on we use the notation $R(a,b)$ for Ramsey numbers.  

\begin{Th} \label{f(2,N)}
If $\lef \frac{N}{2} \rrr \geq R(7,7)$ then
$$f(2,N) \leq \left\lfloor \frac{N}{2} \right\rfloor + 4.$$
\end{Th}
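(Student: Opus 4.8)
The plan is to argue by contradiction: assume $\mathcal{S}$ is a semiintersecting family with $k=2$ (so $|S_A|=|S_B|=2$ for every $S\in\mathcal{S}$) and $|\mathcal{S}|\ge\lfloor N/2\rfloor+5$; since $\lfloor N/2\rfloor\ge R(7,7)$ this forces $|\mathcal{S}|\ge R(7,7)$. First I would set up the auxiliary graph $G$ on the vertex set $\mathcal{S}$ with $S\sim T$ iff $S_A\cap T_A=\emptyset$; the second axiom of a semiintersecting family says exactly that in the complement $\overline{G}$ we have $S\sim T$ iff $S_B\cap T_B=\emptyset$. A clique in $G$ (resp.\ in $\overline{G}$) is a subfamily whose $A$-parts (resp.\ $B$-parts) are pairwise disjoint $2$-subsets of an $N$-element set, hence at most $\lfloor N/2\rfloor$ of them; so $\omega(G),\omega(\overline{G})\le\lfloor N/2\rfloor$. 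Applying Ramsey's theorem to $G$ and using the symmetry between $A$ and $B$ (the other alternative, an independent set in $G$, is a clique in $\overline{G}$ and is treated by the same argument after swapping $A$ and $B$), I may assume $G$ contains a clique $S^1,\dots,S^7$. These are pairwise disjoint in $A$, hence pairwise intersecting in $B$, so $S^1_B,\dots,S^7_B$ are pairwise intersecting $2$-sets; since a pairwise intersecting family of $2$-sets is either a star or the three edges of a triangle, and $7>3$, there is a common element $b^*\in B$ with $b^*\in S^i_B$ for all $i$.

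Next I would analyse $\mathcal{S}_{b^*}:=\{S\in\mathcal{S}:b^*\in S_B\}$, of size $m\ge 7$. Its members pairwise intersect in $B$, hence are pairwise disjoint in $A$; thus $m\le\lfloor N/2\rfloor$ and $\{S_A:S\in\mathcal{S}_{b^*}\}$ is a partial matching in $A$. Write $S_B=\{b^*,c_S\}$ for $S\in\mathcal{S}_{b^*}$. The pivotal lemma is a counting bound: for every $T\in\mathcal{S}\setminus\mathcal{S}_{b^*}$, at most two members $S\in\mathcal{S}_{b^*}$ satisfy $c_S\notin T_B$ --- indeed, $b^*\notin T_B$ together with $c_S\notin T_B$ gives $S_B\cap T_B=\emptyset$, hence $S_A\cap T_A\ne\emptyset$, and the $2$-element set $T_A$ meets at most two members of a matching. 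From this I would derive, in order: no two members of $\mathcal{S}\setminus\mathcal{S}_{b^*}$ are disjoint in $B$ (otherwise every $c_S$ avoids one of two disjoint $2$-sets, forcing $m\le 4$); and the $B$-parts of $\mathcal{S}\setminus\mathcal{S}_{b^*}$ do not include the three edges of a triangle (otherwise every $c_S$ avoids one of three prescribed $2$-sets, forcing $m\le 6$). Hence the $B$-parts of $\mathcal{S}\setminus\mathcal{S}_{b^*}$ form a pairwise intersecting family containing no triangle, i.e.\ a star: either $\mathcal{S}\setminus\mathcal{S}_{b^*}=\emptyset$, whence $|\mathcal{S}|=m\le\lfloor N/2\rfloor$ (contradiction), or there is $b^{**}\ne b^*$ with $b^{**}\in T_B$ for every $T\in\mathcal{S}\setminus\mathcal{S}_{b^*}$.

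In the surviving case every member of $\mathcal{S}$ has its $B$-part meeting $\{b^*,b^{**}\}$, and I would partition $\mathcal{S}$ into $\mathcal{R}_0$ (sets with $S_B=\{b^*,b^{**}\}$), $\mathcal{P}$ (sets with $b^*\in S_B$, $b^{**}\notin S_B$) and $\mathcal{Q}$ (sets with $b^{**}\in S_B$, $b^*\notin S_B$), of sizes $r$, $p$, $q$, so $|\mathcal{S}|=p+q+r$. Both $\mathcal{P}\cup\mathcal{R}_0$ and $\mathcal{Q}\cup\mathcal{R}_0$ are cliques in $G$, giving $p+r\le\lfloor N/2\rfloor$ and $q+r\le\lfloor N/2\rfloor$. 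If $p\le 4$ then $|\mathcal{S}|=p+(q+r)\le\lfloor N/2\rfloor+4$, a contradiction. If $p\ge 5$, write $S_B=\{b^*,v_S\}$ for $S\in\mathcal{P}$ and $T_B=\{b^{**},w_T\}$ for $T\in\mathcal{Q}$; the same counting lemma applied to $T\in\mathcal{Q}\subseteq\mathcal{S}\setminus\mathcal{S}_{b^*}$ shows that for each $T$ all but at most two $S\in\mathcal{P}$ satisfy $v_S=w_T$, and $p\ge 5$ then forces a single value $v_0$ with $w_T=v_0$ for all $T\in\mathcal{Q}$. Now $\mathcal{R}_0\cup\{S\in\mathcal{P}:v_S=v_0\}\cup\mathcal{Q}$ is a clique in $G$ (any two of its members share one of $b^*$, $b^{**}$, $v_0$ in $B$), hence has at most $\lfloor N/2\rfloor$ members, while the leftover $\{S\in\mathcal{P}:v_S\ne v_0\}$ has at most two members; so $|\mathcal{S}|\le\lfloor N/2\rfloor+2$, again a contradiction. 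This yields $f(2,N)\le\lfloor N/2\rfloor+4$.

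I expect the main obstacle to be the structural step in the second paragraph: turning the weak, averaged output of the counting lemma into the rigid statement that $\mathcal{S}\setminus\mathcal{S}_{b^*}$ is again a star. The delicate point is excluding the triangle configuration --- this is exactly what pushes the required Ramsey index up to $7$, since one needs $m>6$ for the three ``exceptional pairs'' of the lemma to overcount. Keeping track of which $c_S$ can be exceptional for which $T$, simultaneously over several choices of $T$, is where the bookkeeping must be done carefully; by comparison, the opening reductions and the final two-star optimization are routine.
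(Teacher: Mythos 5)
Your approach is genuinely different from the paper's --- where the paper hunts for ``catcher'' pairs in $A$ (pairs $P$ with at least three members $S$ having $S_A=P$) and runs a case analysis on the seven pairwise-intersecting sets, you pivot to a common point $b^*\in B$, a matching in $A$, and a clean counting lemma --- but there is a gap at the very first structural step. From seven sets $S^1,\dots,S^7$ pairwise disjoint in $A$ you deduce that their $B$-parts, being more than three pairwise-intersecting $2$-sets, must share a common element $b^*$. That inference is only valid if the seven $2$-sets $S^i_B$ are distinct, and nothing forces them to be: the map $S\mapsto S_B$ need not be injective on this seven-element subfamily, so the multiset of $B$-parts can consist of the three edges of a single triangle $\{x,y\},\{y,z\},\{z,x\}$ taken with multiplicities summing to $7$. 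In that configuration no point of $B$ lies in all seven $B$-parts (at best $\lceil 14/3\rceil=5$ of them contain any one point), so you cannot conclude $m=|\mathcal{S}_{b^*}|\ge 7$, and the contradictions in step 7 (``$m\le 4$'') and step 8 (``$m\le 6$'') --- exactly where the bound $m\ge 7$ does the work --- evaporate. This is precisely the configuration the paper's proof handles as its Case 3.

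The gap is repairable inside your own framework, and in fact the triangle case is easier, not harder. If the distinct $B$-parts of $S^1,\dots,S^7$ form a triangle, then by pigeonhole at least three of the seven sets share the same $B$-part $P=\{b^*,b^{**}\}$; those three are pairwise disjoint in $A$, so any $T\in\mathcal{S}$ with $T_B\cap P=\emptyset$ would have to meet all three in $A$, which is impossible for $|T_A|=2$. Thus $P$ is already a two-element ``catcher'' in $B$ --- exactly the object your steps 7--9 labor to manufacture in the star case --- and you can jump straight to the partition $\mathcal{R}_0,\mathcal{P},\mathcal{Q}$ and the final optimization, which go through verbatim. You should also record explicitly that $\mathcal{Q}\ne\emptyset$ before choosing $T\in\mathcal{Q}$ in the $p\ge 5$ subcase (immediate, since $\mathcal{Q}=\emptyset$ gives $|\mathcal{S}|=p+r\le\lfloor N/2\rfloor$). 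With those two patches your proof is correct, and the counting lemma together with the star/triangle dichotomy arguably organizes the case analysis more transparently than the paper's catcher-based argument.
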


\begin{proof}

Let $\m{S}$ be a semiintesecting family with maximal cardinality. 
Then from the trivial lower bound $|\m{S}| \geq \lef \frac{N}{2} \rrr \geq R(7,7)$. 

Consider the graph $G$ that has the sets in $\m{S}$ as its vertices and two vertices $S,T \in \m{S}$ are connected if and only if $(S_A) \cap (T_A) \neq \emptyset$.
From Ramsey's Theorem we know that this graph contains $7$ vertices that form a click or an independent set. 

In the former case, there are $7$ sets in $\m{S}$ pairwise intersecting in $A$. 
In the latter case, there are $7$ sets in $\m{S}$ pairwise disjoint in $A$, therefore pairwise intersecting in $B$. By the symmetry between $A$ and $B$ we can assume that there are $7$ sets pairwise intersecting in $A$. Let $\m{T} \subset \m{S}$ denote such a $7$-element subfamily. 

Let us call a $2$-element subset $P\subset A$ \textit{catcher}, if there exist at least $3$ sets $S_1,S_2,S_3 \in \m{S}$ such that $(S_1)_A=(S_2)_A=(S_3)_A=P$. Notice that if $T_A \cap P=\emptyset$ for some $T \in \m{S}$, then $T$ must intersect $S_1,S_2$ and $S_3$ in $B$ which is impossible, as they are disjoint in $B$ and $|T_B|=2$. So all $T \in \m{S}$ must intersect $P$. This is a pretty strong condition, so our goal is to find catcher subsets of $A$.

We have two possibilities for $\m{T}$. Either there is an element $c\in A$ with $c \in T$ for all $T \in \m{T}$ (\textbf{Case 1} and \textbf{Case 2}). If there is no such $c$, then there are subsets $T_A$ ($T \in \m{T}$) of the form $\{c,d\}$, $\{c,e\}$ and one that is not containing $c$, which must be $\{d,e\}$. All other sets in $\m{T}$ must intersect $\{c,d\},\{d,e\}$ and $\{e,c\}$, which is only possible, if $T_A=\{c,d\},\{d,e\}$ or $\{e,c\}$ for all $T\in \m{T}$ (\textbf{Case 3}).

\textbf{Case 1}: There is an element $c \in A$ that is contained in all $T \in \m{T}$ and $c \in S$ for all $S \in \m{S}$. All sets of $\m{S}$ intersect at $c$, so they are pairwise disjoint in $B$, which means that $|\m{S}| \leq \lef \frac{N}{2} \rrr$, and we are finished.

\textbf{Case 2}: There is an element $c \in A$ that is contained in all $T \in \m{T}$ but  $c \notin S$ for some $S \in \m{S}$. 
Let us choose such an $S \in \m{S}$ and call $d$ and $e$ the elements of $S_A$, i.e. $S_A=\{d,e\}$. The sets in $\m{T}$ are disjoint in $B$, and $S$ can intersect at most two of them in $B$ as $|S_B|=2$. Therefore $S_A$ must intersect all but at most two sets in $\m{T}$, which means that at least $5$ sets from $\m{T}$ contain $d$ or $e$. Consequently either $d$ or $e$ is contained in at least $3$ sets from $\m{T}$. We can assume that the former holds and conclude that $\{c,d\}\subset A$ is a catcher subset. 

\textbf{Case 3}: $T_A=\{c,d\}$, $\{c,e\}$ or $\{d,e\}$ for all $T\in \m{T}$. This means that one of them must occur at least $3$ times. We can assume that $\{c,d\}$ occurs at least $3$ times, and conclude that $\{c,d\}\subset A$ is a catcher subset.

So in both \textbf{Case 2} and \textbf{Case 3} we concluded that $P:=\{c,d\}$ is a catcher subset. Let $\m{C} \subset \m{S}$ with $C \in \m{C}$ exactly if $c \in C$ and $d \notin C$. If $|\m{C}| \leq 4$ then all but at most $4$ sets from $\m{S}$ contain $d$, and these sets are pairwise disjoint in $B$, so in this case $|\m{S}| \leq \lef \frac{N}{2} \rrr +4$, therefore the statement is true.

Thus we may assume that $|\m{C}|>4$. Similarly, let $\m{D} \subset \m{S}$ with $D \in \m{D}$ exactly if $c \notin D$ and $d \in D$. In the same way we can assume, that $|\m{D}|>4$. $\m{D}$ is nonempty, therefore we may choose $D\in \m{D}$. Call $e$ the other element of $D_A$, i.e. $D_A=\{d,e\}$.

All the sets of $\m{C}$ are disjoint in $B$, so $D$ can intersect at most two of them in $B$. Consequently it intersects at least $3$ of them, $C_1,C_2,C_3$, in $A$. Notice, that it is only possible if $(C_1)_A=(C_2)_A=(C_3)_A=\{c,e\}$, which means that $Q:=\{c,e\}$ is a catcher subset.

$S_A\cap P\neq \emptyset$ and $S_A\cap Q\neq \emptyset$ for all $S\in \m{S}$, so either $c \in S_A$ or $S_A=\{d,e\}$. If there are less than $3$ sets with $S_A=\{d,e\}$ then all but these contain $c$, so they are disjoint in $B$, therefore $|\m{S}|<\lef \frac{N}{2} \rrr+3$, and we are finished.

The other option is that $R:=\{d,e\}$ is also a catcher subset. 
Then $S_A\cap P\neq \emptyset$, $S_A\cap Q\neq \emptyset$ and $S_A\cap R\neq \emptyset$ for all $S \in \m{S}$. It is only possible if $S_A= P,Q$ or $R$ for all $S \in \m{S}$. So all set in $\m{S}$ pairwise intersect in $A$, thus they are pairwise disjoint in $B$, so $|\m{S}| \leq \lef \frac{N}{2} \rrr$. The proof is complete.

\end{proof}

And here comes the most important and challenging theorem of our article. We solve the problem for large $N$ up to a constant deviation. 

\begin{Th}\label{thm}
For all $k$ there exists a constant $c$ (depending only on $k$) with $f(k,N)\leq\left\lfloor \frac{N}{k} \right\rfloor+c$ for all $N$.
\end{Th}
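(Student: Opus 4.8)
The plan is to mimic the structure of the proof of Theorem~\ref{f(2,N)}, but carry out the combinatorial bookkeeping in a $k$-uniform setting where every bound is allowed to depend on $k$. Start by taking a maximal semiintersecting family $\m{S}$, and assume $|\m{S}|$ is large (otherwise there is nothing to prove, since $|\m{S}|$ large is what makes the statement nontrivial). As before, form the graph $G$ on vertex set $\m{S}$ with $S\sim T$ iff $S_A\cap T_A\neq\emptyset$; by the $A$--$B$ symmetry and Ramsey's theorem (applied with a threshold $R(m,m)$ for a suitable $m=m(k)$), we may assume $\m{S}$ contains a large subfamily $\m{T}$ that is pairwise intersecting in $A$. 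The point of making $m$ large is that a $k$-uniform intersecting family is structurally simple: either it is a ``star'' (all sets share a common element) or, by the Hilton--Milner bound, it has bounded size once $N$ is large enough relative to $k$. Apply this dichotomy to the $A$-sides $\{T_A : T\in\m{T}\}$ — after possibly discarding a bounded number of sets, we get a common element $c\in A$ lying in $T_A$ for all $T$ in a still-large subfamily.

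Next I would run the catcher argument. Define a $k$-subset $P\subset A$ to be a \emph{catcher} if at least $k+1$ members of $\m{S}$ have $A$-side exactly $P$ (the threshold $k+1$ replaces $3$ from the $k=2$ case, since a set with $|S_B|=k$ can hit at most $k$ pairwise-$B$-disjoint sets); then every $S\in\m{S}$ must satisfy $S_A\cap P\neq\emptyset$. The goal is to show that if $|\m{S}|$ exceeds $\lfloor N/k\rfloor$ by more than a $k$-dependent constant, then we can produce a family of catchers $P_1,\dots,P_r$ (with $r$ bounded in terms of $k$) whose union is ``small'' — more precisely, such that the condition ``$S_A$ meets every $P_i$'' forces $S_A$ into a bounded list of possibilities, or forces all but boundedly many $S$ to contain a single common element $c$. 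In the latter case those sets are pairwise $B$-disjoint, giving $|\m{S}|\le\lfloor N/k\rfloor + O_k(1)$; in the former case all the $S_A$ lie in a bounded set, so again most sets pairwise intersect in $A$ and we conclude the same way. To find the first catcher: take the large star-subfamily through $c$; pick some $S\in\m{S}$ with $c\notin S$ (if none exists we are immediately done as in Case~1); since $S_B$ meets at most $k$ of the pairwise-$B$-disjoint star sets, $S_A$ meets all but $\le k$ of them, so one of the $k$ elements of $S_A$ lies in at least (large $/k$) of them, i.e.\ together with $c$ spans many $A$-sides; iterating this pigeonhole $k-1$ more times isolates an $A$-side that repeats at least $k+1$ times, a catcher containing $c$.

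Then comes the inductive/iterative core: having one catcher $P\ni c$, split $\m{S}$ according to which elements of $P$ a set contains. If all but $O_k(1)$ sets contain one fixed element of $P$, we are done. Otherwise at least two elements $c,d\in P$ are each contained in more than a $k$-dependent constant number of sets that avoid the rest of $P$; picking such a set $D\ni d$, $c\notin D$, the same ``$D_B$ hits $\le k$ of a pairwise-$B$-disjoint family'' argument forces many sets with a prescribed $A$-side, i.e.\ a new catcher $Q$. One shows the catchers produced this way are ``spread out'' enough that after boundedly many rounds the mutual-intersection constraints $S_A\cap P_i\neq\emptyset$ for all $i$ pin $S_A$ down to finitely many (in terms of $k$) sets, which is the terminal case. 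I expect the main obstacle to be exactly this termination/accounting: making precise why the process of generating catchers must stop after $O_k(1)$ steps, and bounding the number of ``exceptional'' sets discarded at each step so that the total deviation from $\lfloor N/k\rfloor$ stays bounded by a single constant $c=c(k)$. This requires a careful choice of the Ramsey parameter and the catcher threshold, and an argument — presumably again via Hilton--Milner or a direct sunflower-type estimate — that a $k$-uniform family in which every set meets a fixed bounded collection of $k$-sets, yet which is not almost a star, cannot itself be large once $N\gg k$.
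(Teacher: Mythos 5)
Your proposal shares the opening move with the paper (Ramsey to extract a large subfamily pairwise intersecting in $A$, then exploit the fact that its $B$-sides are pairwise disjoint), and the notion of ``catcher'' is close in spirit to what the paper ultimately produces. But there are two genuine gaps that prevent this from being a proof.

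First, the Hilton--Milner invocation does not do what you want. The Hilton--Milner bound for a non-trivially intersecting $k$-uniform family on $N$ elements is $\binom{N-1}{k-1}-\binom{N-k-1}{k-1}+1$, which is of order $N^{k-2}$ and in particular is \emph{not} bounded by a function of $k$ alone. So the dichotomy ``star or $O_k(1)$ sets'' is false for the raw family $\{T_A\}$; you cannot conclude that discarding boundedly many sets yields a common element $c$. What replaces this in the paper is a bespoke iterated-Ramsey argument: a recursive descent $\m{M}_k \supset \m{R}_k \supset \m{M}_{k-1} \supset \cdots \supset \m{R}_1$ in which at each stage you either find a $(2k+1)$-element independent set (a near-sunflower whose pairwise intersection is some fixed $C$ with $|C|<k$, at which point a separate lemma — the paper's Lemma~\ref{2k+1} — shows every other set must hit $C$), or you find a clique and can pigeonhole one more common coordinate $z_i\in A$. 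After at most $k$ rounds this terminates with $\geq k+1$ sets sharing the same $A$-side, which is your catcher; the Ramsey parameters are chosen so that each step is guaranteed. This iterated structure is exactly the missing replacement for the false Hilton--Milner step.

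Second, and more importantly, your closing ``generate catchers until the constraints pin $S_A$ down'' is exactly where you admit the accounting is unclear, and this is indeed where the plan breaks down for general $k$: for $k=2$ the case analysis (Cases 1--3 in the paper's Theorem~\ref{f(2,N)}) closes because there are so few $2$-subsets hitting two given pairs, but for general $k$ this blows up and there is no obvious termination. The paper avoids this entirely with a much shorter endgame. Packaging the first part as a lemma — from any subfamily $\m{Z}\subset\m{S}$ of size $\geq m(k)$ one can extract a single $Z$ whose $A$-side (or $B$-side) meets every set of $\m{S}$ — one then sets $\m{F}=\{X\in\m{S}: X_A\cap S_A\neq\emptyset\ \forall S\in\m{S}\}$. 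All members of $\m{F}$ pairwise meet in $A$, hence are pairwise disjoint in $B$, so $|\m{F}|\leq\lfloor N/k\rfloor$. If $|\m{S}|\geq m+\lfloor N/k\rfloor$ then $|\m{S}\setminus\m{F}|\geq m$, and applying the lemma to $\m{S}\setminus\m{F}$ gives an immediate contradiction in both cases: a new universal-$A$-center would have to already lie in $\m{F}$, while a universal-$B$-center $X\notin\m{F}$ together with any $F\in\m{F}$ would meet $F$ in both $A$ and $B$. That single clean step replaces your whole open-ended iteration and is the key idea you are missing.
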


\begin{Rem}
In a forthcoming work with Zoltán Füredi, among
others, we will show that $c= \Theta ( 4^k \sqrt{k})$.
\end{Rem}

\begin{proof}
Let $\m{S}$ be a semiintersecting family for some $k$, $N$. First we prove two lemmas.
%For a set $S \in \m{S}$ use the $A_S=A \cap S$ and $B_S=B \cap S$.

\begin{Lemma}\label{2k+1}
Suppose that we have a set $C\subset A$ with $1\leq|C|\leq k-1$. Furthermore, let $S_1,S_2,\ldots ,S_{2k+1} \in \m{S}$ be sets such that the intersection of any two of them is exactly $C$. Then all of the other sets of $\m{S}$ must intersect some element of $C$.
\end{Lemma}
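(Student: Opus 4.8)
The plan is to argue by contradiction. Suppose some set $T \in \m{S}$ with $T \notin \{S_1, \dots, S_{2k+1}\}$ satisfies $T_A \cap C = \emptyset$; I will show that this forces $T_A$ to contain at least $k+1$ distinct elements, contradicting $|T_A| = k$. First I would record the structure forced by the hypothesis on the $S_i$: since $S_i \cap S_j = C \subset A$ for all $i \neq j$, the $B$-parts $(S_1)_B, \dots, (S_{2k+1})_B$ are pairwise disjoint (each of size $k$), and the ``trimmed'' $A$-parts $(S_1)_A \setminus C, \dots, (S_{2k+1})_A \setminus C$ are also pairwise disjoint, each of size $k - |C| \geq 1$.

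Now for each index $i$, the semiintersecting condition applied to the pair $T, S_i$ (note $T \neq S_i$) says that exactly one of $T_A \cap (S_i)_A \neq \emptyset$ and $T_B \cap (S_i)_B \neq \emptyset$ holds. Let $I_B$ collect the indices of the second kind and $I_A$ the rest, so $|I_A| + |I_B| = 2k+1$. Since the sets $(S_i)_B$ with $i \in I_B$ are pairwise disjoint and each is met by the $k$-element set $T_B$, we get $|I_B| \leq k$, hence $|I_A| \geq k+1$. On the other hand, for each $i \in I_A$ we have $T_A \cap (S_i)_A \neq \emptyset$, and since $T_A$ is disjoint from $C$, this forces $T_A$ to meet $(S_i)_A \setminus C$. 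As these $|I_A| \geq k+1$ trimmed $A$-parts are pairwise disjoint, choosing one element of $T_A$ in each yields $k+1$ distinct elements of $T_A$ --- the desired contradiction. Finally, each $S_i$ itself contains the nonempty set $C$, so in fact every set of $\m{S}$ meets $C$, which is what the lemma asserts.

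I do not expect a genuine obstacle here; the argument is a double pigeonhole. The two points that need care are using the ``exactly one side'' formulation of the semiintersecting property (so that the pair $T, S_i$ necessarily intersects, and on precisely one side), and keeping track that the two families $\{(S_i)_B\}_i$ and $\{(S_i)_A \setminus C\}_i$ are each pairwise disjoint --- this is precisely what makes $2k+1$ the right threshold, since it is the smallest number of sets $S_i$ for which $|I_B| \leq k$ forces $|I_A| \geq k+1$, which in turn exceeds the budget $|T_A| = k$.
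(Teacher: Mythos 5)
Your argument is correct and is essentially the same as the paper's: both use that the families $\{(S_i)_B\}$ and $\{(S_i)_A\setminus C\}$ are each pairwise disjoint, so a $k$-element $T_B$ can meet at most $k$ of the former and a $k$-element $T_A$ (disjoint from $C$) at most $k$ of the latter, while semiintersection forces $T$ to meet each of the $2k+1$ sets $S_i$ on exactly one side. Your write-up just makes the pigeonhole bookkeeping ($|I_A|+|I_B|=2k+1$, $|I_B|\le k$, hence $|I_A|\ge k+1$) a bit more explicit than the paper does.
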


\begin{proof}
Since all of the sets $S_1,S_2,\ldots, S_{2k+1}$ intersect in $A$, they are disjoint in $B$. Suppose that for some set $S \in \m{S}$, $S \cap C= \emptyset$. We have that $S_i \cap S_j=C$ for all $i,j$, therefore $S_i \setminus C$ and $S_j \setminus C$ are disjoint. There are $2k+1$ sets that $S$ has to intersect either in $A$ or in $B$. Recall that $(S_i)_B$ and $(S_j)_B$ are disjoint for all $i,j$, and so are all the sets $(S_i)_A \setminus C $ and $(S_j)_A \setminus C$. This means that $S$ cannot intersect more than $k$ of them neither in $A$ nor in $B$, since it has only $k$ elements both in $A$ and in $B$, so we have a contradiction.

\end{proof}

We define two recursive sequences that will play an important role in Lemma~\ref{intersection lemma}.

\begin{Def}\label{seqences}
Define 
$$r_1:=k+1$$
and let 
$$r_i:=R(m_{i-1},2k+1)$$ 
for $i \geq 2.$

Furthermore, for all $i\geq 1$ define
$$m_i=r_i\cdot i.$$ 
Note that this is a well-defined recursive definition, as the two sequences determine each other. $r_1$ is given, $r_i$ determines $m_i$ and $m_i$ determines $r_{i+1}$.
\end{Def}

\begin{Lemma} \label{intersection lemma}
There exists an $m \in \mathbb{N}$ such that it depends only on $k$, and for any $\m{Z} \subset \m{S}$ with $|\m{Z}|\geq m$ there exists a $Z \in \m{Z}$ such that one of the following holds. 
\begin{itemize}
    \item $Z_A \cap S_A \neq \emptyset$ for all $S \in \m{S}$. 
    \item $Z_B \cap S_B  \neq \emptyset$ for all $S \in \m{S}$.
\end{itemize}
We will show that $m=R(m_k,m_k)$ is an appropriate choice.
\end{Lemma}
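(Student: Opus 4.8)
The plan is to apply Ramsey's theorem once at the ``top level'', and then to run a sunflower-type extraction whose two nested recursions are precisely the sequences $(r_i)$ and $(m_i)$ of Definition~\ref{seqences}. First I would set $m:=R(m_k,m_k)$ and take $\m{Z}\subseteq\m{S}$ with $|\m{Z}|\geq m$. On $\m{Z}$ consider the graph in which $S,T$ are joined iff $S_A\cap T_A\neq\emptyset$; by the semiintersecting property this is the same as $S_B\cap T_B=\emptyset$. Ramsey's theorem gives $\m{W}\subseteq\m{Z}$ with $|\m{W}|=m_k$ that is either a clique or an independent set in this graph, and after possibly swapping the roles of $A$ and $B$ I may assume it is a clique, so the members of $\m{W}$ are pairwise intersecting in $A$ and pairwise disjoint in $B$. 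The goal is then to produce $Z\in\m{W}\subseteq\m{Z}$ with $Z_A\cap S_A\neq\emptyset$ for every $S\in\m{S}$; the independent-set case gives the second alternative by the same argument with $A$ and $B$ interchanged.

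The core of the argument is a descent: for $i=k,k-1,\dots,1$ I would maintain a set $C\subseteq A$ with $|C|=k-i$ and a subfamily $\m{G}\subseteq\m{W}$ with $|\m{G}|\geq m_i$ such that $C\subseteq W_A$ for all $W\in\m{G}$ and the reduced sets $\{\,W_A\setminus C:W\in\m{G}\,\}$, each of size $i$, are pairwise intersecting; this holds at the start with $i=k$, $C=\emptyset$, $\m{G}=\m{W}$. At a step with $i\geq 2$: fix $W_0\in\m{G}$; each other member of $\m{G}$ meets the $i$-element set $(W_0)_A\setminus C$, so by pigeonhole some $c_i\in(W_0)_A\setminus C$ lies in $W_A$ for at least $\lceil(m_i-1)/i\rceil=r_i$ of them. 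Put $C':=C\cup\{c_i\}$, of size $k-(i-1)$, and let $\m{G}'\subseteq\m{G}$ be the members containing $c_i$, so $|\m{G}'|\geq r_i=R(m_{i-1},2k+1)$. Then apply Ramsey's theorem to the graph on $\m{G}'$ in which $W\sim W'$ iff $(W_A\setminus C')\cap(W'_A\setminus C')\neq\emptyset$:
\begin{itemize}
  \item if it has an independent set $W_1,\dots,W_{2k+1}$, then the reduced parts being pairwise disjoint and the members being pairwise disjoint in $B$ force $W_j\cap W_l=C'$ for all $j\neq l$, with $1\leq|C'|\leq k-1$, so Lemma~\ref{2k+1} makes every $S\in\m{S}$ meet $C'\subseteq(W_1)_A$, and $Z:=W_1$ works;
  \item otherwise it has a clique of size $m_{i-1}$, which together with the core $C'$ satisfies the invariant at level $i-1$, and the descent continues.
\end{itemize}
If the descent reaches $i=1$, the reduced sets are pairwise intersecting singletons, hence all equal to one $\{c_1\}$, so every $W\in\m{G}$ has $W_A=C\cup\{c_1\}$; since $|\m{G}|\geq m_1=k+1$, these $k+1$ sets are pairwise disjoint in $B$ and $|S_B|=k$ for all $S\in\m{S}$, so no $S$ can avoid $C\cup\{c_1\}$ in $A$ (else it would meet all $k+1$ of them in $B$), and $Z$ can be taken to be any member of $\m{G}$.

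The main obstacle is conceptual rather than computational: getting the invariant and the two recursions to match. A pigeonhole step costs a factor $i$ in the family size, which is exactly why one needs $m_i=r_i\cdot i$ sets to guarantee $r_i$ of them sharing a new core element; and a Ramsey step with parameters $m_{i-1}$ and $2k+1$ must either deliver a $(2k+1)$-petal sunflower to feed into Lemma~\ref{2k+1} or a pairwise-intersecting family large enough to recurse. Keeping every subfamily inside $\m{W}$ is what makes both the base case $i=1$ and the hypothesis of Lemma~\ref{2k+1} available, since all these sets stay pairwise disjoint in $B$. Once this scaffolding is fixed, only the routine pigeonhole and Ramsey estimates above remain, and $m=R(m_k,m_k)$ manifestly depends on $k$ alone.
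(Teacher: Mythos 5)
Your proposal follows essentially the same approach as the paper: set $m=R(m_k,m_k)$, apply Ramsey once to pass to a subfamily of size $m_k$ pairwise intersecting in $A$, and then run a pigeonhole/Ramsey descent that either produces $2k+1$ sets meeting pairwise in a common core $C'$ (so Lemma~\ref{2k+1} applies) or shrinks the family while growing the core, terminating at a family of $k+1$ sets with identical $A$-parts. Your bookkeeping via the core $C$ and the ``reduced sets'' $W_A\setminus C$ is a cleaner way to express the paper's explicit chain $\m{Z}\supset\m{M}_k\supset\m{R}_k\supset\cdots\supset\m{R}_1$ and the elements $z_1,\dots,z_{k-l}$, but the underlying argument, the recursions $r_i=R(m_{i-1},2k+1)$ and $m_i=r_i\cdot i$, and the role of Lemma~\ref{2k+1} are identical.
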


\begin{proof}

Throughout the proof we will use the sequences $r_i$ and $m_i$ defined in Definition~\ref{seqences}. 

%Furthermore let us use the notation $X_A:= X\cap A$ and $X_B:=X\cap B$ for any set $X \subset A \cup B$.

Suppose, that $\m{Z} \subset \m{S}$ has at least $m=R(m_k,m_k)$ elements.

 \begin{comment}
 \begin{figure}
\centering
\begin{subfigure}{.6\textwidth}
  \centering
    \includegraphics[scale=0.23]{gráf3.png}
  \caption{A semiintersecting family}
  \label{fig:sub1}
\end{subfigure}%
\begin{subfigure}{.4\textwidth}
  \centering
    \includegraphics[scale=0.35]{gráf2.png}
  \caption{Corresponding intersection graph}
  \label{fig:sub2}
\end{subfigure}
\caption{Intersection graph}
\label{fig:test}
\end{figure}
\end{comment}
 
 We will construct the same graph as we did in the beginning of the proof of Theorem~\ref{f(2,N)}. %(Example in Figure~\ref{fig:test}).
 The vertices are the sets in $\m{Z}$, and two of them are connected if and only if they intersect in $A$. Since we supposed that $|\m{Z}|\geq R(m_k,m_k)$, our graph must either contain a click or an independent set of size $m_k$ because of the Ramsey theorem. This means that there must be a subset $\m{M}_{k} \subset \m{Z}$ such that $|\m{M}_{k}|\geq m_k$, and either $X_A \cap Y_A$ is nonempty for every $X,Y \in \m{M}_{k}$  or $X_B \cap Y_B $ is nonempty for every $X,Y \in \m{M}_k$. We can assume that the former holds, the other case is the same as the roles of $A$ and $B$ are symmetric.
 Now let us choose a set $X\in \m{M}_{k}$. Since  $Y_A\cap X_A$ is nonempty for every $Y\in \m{M}_{k}$, and since $|\m{M}_{k}|\geq m_k= k\cdot r_k$ by definition, there must be an element $z_1 \in X_A \subset A$, such that at least $r_k$ sets in $\m{M}_k$ contain $z_1$. Let us call the set of these sets $\m{R}_k$. Formally:
$$\m{R}_k:= \{M\in \m{M}_k:z_1\in M_A\}$$
We want to create a descending chain of subfamilies 
$$\m{Z} \supset \m{M}_k \supset \m{R}_k \supset \m{M}_{k-1} \supset \m{R}_{k-1} \supset \m{M}_{k-2} \supset \ldots \supset \m{R}_1$$
such that the following conditions hold.
 \begin{itemize}
     \item $|\m{M}_l| \geq m_l$ and $|\m{R}_l| \geq r_l$ for all $1 \leq l \leq k.$
    \item  For all $1 \leq l \leq k$ there exist $z_1, z_2, \ldots, z_{k-l} \in A$ such that $\{z_1, z_2, \ldots, z_{k-l} \} \subset M$ for all $M \in \m{M}_l$ and $\{z_1, z_2, \ldots, z_{k-l} \} \subsetneq M \cap N$ for all $M,N \in \m{M}_l$.
    \item For all $1 \leq l \leq k$ there exist $z_1, z_2, \ldots, z_{k-l+1} \in A$ with $\{z_1, z_2, \ldots, z_{k-l+1} \} \subset R$ for all $R \in \m{R}_l$.
 \end{itemize}

We do this by induction. For the base step we already created the families $\m{M}_k$ and $\m{R}_k$, it is easy to check that these satisfy the conditions of the induction. For the induction step we assume that we already created $\m{M}_l$ and $\m{R}_l$ ($2 \leq l \leq k$), and then we try to construct the subfamilies $\m{M}_{l-1}$ and $\m{R}_{l-1}$.

Thus we have $\m{M}_l$ and $\m{R}_{l}$ such that $\{z_1,z_2,\ldots, z_{k+1-l}\}\subset X$ for every $X \in \m{R}_{l}$ and $|\m{R}_{l}|\geq r_{l}$.

We create a graph again: the vertices are the sets from $\m{R}_{l}$, and two of them, $X$ and $Y$, are connected if and only if $\{z_1,z_2,\ldots, z_{k+1-l}\} \subsetneq X_A \cap Y_A$. Since $|\m{R}_l|\geq r_{l}=R(m_{l-1},2k+1) $, we have either a click of size $m_{l-1}$ or an independent set of size $2k+1$.

In the latter case, we can use Lemma~\ref{2k+1}, as we have an independent set of size $2k+1$, which means that the corresponding sets pairwise intersect in exactly $\{z_1,z_2,\ldots,z_{k+1-l}\}$. Therefore, we conclude that for every set $X \in \m{S}$, $X_A\cap \{z_1,z_2,\ldots, z_{k+1-l}\}\neq \emptyset$, in particular any $Z \in \m{R}_l$ proves the lemma as $\{z_1,z_2, \ldots, z_{k+1-l}\} \subset Z$.

In the former case, let us define $\m{M}_{l-1}$ to be a subfamily of $\m{R}_{l}$ such that for every pair $X,Y \in \m{M}_{l-1}$ it holds that $\{z_1,z_2, \ldots, z_{k+1-l}\} \subsetneq X_A \cap Y_A$, and among these subfamilies it has maximal cardinality. We have $|\m{M}_{l-1}| \geq m_{l-1}$, since we had a click of size $m_{l-1}$ in the graph we defined.

We created $\m{M}_{l-1}$ and it indeed satisfies the conditions stated above. Now we want to find a subfamily of $\m{M}_{l-1}$ that is appropriate for $\m{R}_{l-1}$. We will do the same thing we did in the base case when we created $\m{R}_k$ from $\m{M}_k$.  

Let us choose a set $X \in \m{M}_{l-1}$. All $M \in \m{M}_{l-1}$ must intersect $X_A\setminus \{z_1,z_2, \ldots, z_{k+1-l}\}$, and $|X_A\setminus \{z_1,z_2, \ldots, z_{k+1-l}\}|=l-1$. By definition $m_{l-1}=r_{l-1}\cdot (l-1)$, so we can find an element ${z_{k+1-(l-1)}}$ in  $X_A$ (with $z_{k+1-(l-1)} \neq z_1,z_2, \ldots, z_{k+1-l}$) such that at least $r_{l-1}$ sets from $\m{M}_{l-1}$ contain it. Here we used that $k+1-l<k$, and therefore we could choose some element other than $z_1,z_2, \ldots, z_{k+1-l}$. Now define
$$\m{R}_{l-1}:=\{M\in \m{M}_{l-1}: z_{k+1-(l-1)}\in M_A\}.$$

We got a family of sets, $\m{R}_{l-1}$, such that $\{z_1,z_2, \ldots, z_{k+1-(l-1)}\} \subset X$ for every set $X \in \m{R}_{l-1}$ and $|\m{R}_{l-1}| \geq r_{l-1}$. These were the conditions for $\m{R}_{l-1}$, therefore the induction works.

Throughout the induction, if at any step the graph we created has an independent set of size $2k+1$ then we have proved the lemma.

If this case never occurs, so we always find a large clique, then we terminate in the case when $l=1$. Therefore, we get a family $\m{R}_1$ and elements $z_1, z_2, \ldots, z_k \in A$ with $|\m{R}_1|\geq r_1=k+1$ and $\{z_1,z_2, \ldots, z_k\} \subset X_A$ for all $X\in \m{R}_1$. Meaning that for all $X \in \m{R}_1$ we have $X_A=\{z_1,z_2, \ldots, z_k\}$. Now all $S \in \m{S}$ must intersect $\{z_1,z_2, \ldots, z_k\}$, because otherwise it intersects all the sets from $\m{R}_{1}$ in $B$, which is clearly impossible as these are at least $k+1$ sets which must be pairwise disjoint in $B$, and $|S_B|=k$. For any $Z \in \m{R}_1$ we have $\{z_1,z_2, \ldots, z_k\} \subset Z$, therefore it is a good choice for the lemma. So by choosing $m=R(m_k,m_k)$ the lemma is indeed true.

\end{proof}

Now we turn to prove Theorem~\ref{thm}.
We will prove that $|\m{S}|\leq \lef \frac{N}{k}\rrr +m-1$. 

Suppose on the contrary that $|\m{S}|\geq m+\lef\frac{N}{k}\rrr$.
Then $|\m{S}|\geq m$, so we can apply Lemma~\ref{intersection lemma} with $\m{Z}=\m{S}$, and conclude that there exists a set $Z \in \m{Z}$, such that either $Z_A\cap S_A \neq \emptyset$ for all $S\in \m{S}$ or $Z_B\cap S_B \neq \emptyset$ for all $S\in \m{S}$. We can suppose without loss of generality that the former holds, and define the following nonempty family of sets.
$$\m{F}=\{X\in \m{S}: X_A\cap S_A\neq \emptyset \text{ for all } S \in \m{S} \}$$

We know, that $F_A\cap G_A$ is nonempty for every $F,G\in \m{F}$, therefore they are disjoint in $B$, so $|\m{F}|\leq \lef\frac{N}{k}\rrr$.

Consequently, $|\m{S}\setminus\m{F}|\geq m$, so we can apply Lemma~\ref{intersection lemma} again for $\m{Z}=\m{S}\setminus \m{F}$. The roles of $A$ and $B$ are not symmetric anymore, therefore we have to check both cases of the lemma.

\textbf{Case 1}: There exists a set $X\in \m{S}\setminus\m{F}$ such that $S_A\cap X_A\neq\emptyset$ for all $S \in \m{S}$. However, this was the definition of the family $\m{F}$, and $X$ is not in $\m{F}$, so this is a contradiction.

\textbf{Case 2}: There exists a set $X\in \m{S}\setminus\m{F}$ such that $S_B\cap X_B\neq\emptyset$ for all $S \in \m{S}$.
Let us choose a set $F$ from $\m{F}$. $F \neq X$ as $F \in \m{F}$ and $X \notin \m{F}$. Notice that $F_A\cap X_A\neq \emptyset$ because of the definition of $\m{F}$, and $F_B\cap X_B \neq \emptyset$ because of the definition of $X$. This contradicts $\m{S}$ being a semiintersecting family, so we proved Theorem~\ref{thm}.

\end{proof}

\section{Lower bounds} \label{lower}

\noindent We start with three simple observations. 

The first is the trivial lower bound $\lef \frac{N}{k} \rrr$. Just take a family $\m{S}$ with $S_A=T_A$ and $(S_B) \cap (T_B)=\emptyset$ for all $S,T \in \m{S}$.  This is actually not as weak as it looks, see Theorem~\ref{thm}.

Secondly, if $k$ is fix and $N_1<N_2$ then $f(k,N_1) \leq f(k,N_2)$, as a semiintersecting family for $(k,N_1)$ can be considered as a semiintersecting family for $(k,N_2)$ where we do not use the remaining elements. So in other words, $f(k,N)$ is monotonously increasing in $N$. 

The third observation is a bit more complex, so we state it as a lemma.

\begin{Lemma} \label{osztodas}
Assume that we have a semiintersecting family $\m{S}$ with constants $(k,N)$. Let $g: A \cup B \to \ZZ^+$ with the following properties.

\noindent There exists a constant $N'$ with
$$\sum_{a \in A} g(a)=\sum_{b \in B} g(b)=N',$$
and there exists a constant $k'$ with 
$$\sum_{s \in S_A} g(s) = \sum_{s \in S_B} g(s)=k'$$
for all $S \in \m{S}$.

\noindent Then $f(k',N') \geq |\m{S}|$.
\end{Lemma}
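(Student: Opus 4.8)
The plan is to "blow up" each element of $A\cup B$ into a block of size equal to its $g$-value, and transport the semiintersecting family $\m S$ to a semiintersecting family with constants $(k',N')$ of the same cardinality. Concretely, I would fix disjoint sets $A'$ and $B'$ with $|A'|=|B'|=N'$, and choose partitions $A'=\bigsqcup_{a\in A}A'_a$ and $B'=\bigsqcup_{b\in B}B'_b$ where $|A'_a|=g(a)$ and $|B'_b|=g(b)$; this is possible exactly because $\sum_{a\in A}g(a)=\sum_{b\in B}g(b)=N'$. For $S\subset A\cup B$ write $\varphi(S):=\bigcup_{x\in S}X'_x$ where $X'_x$ is $A'_x$ if $x\in A$ and $B'_x$ if $x\in B$; then set $\m S':=\{\varphi(S):S\in\m S\}$.

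The key steps, in order: first, check the uniformity condition. For $S\in\m S$ we have $\varphi(S)\cap A'=\bigcup_{s\in S_A}A'_s$, whose size is $\sum_{s\in S_A}g(s)=k'$ by hypothesis, and similarly $|\varphi(S)\cap B'|=k'$; so every member of $\m S'$ has $k'$ elements on each side. Second, check that $\varphi$ is injective on $\m S$ and, more importantly, that it preserves the intersection pattern on each side: since the blocks $A'_a$ are pairwise disjoint and nonempty (here we use $g(a)\ge 1$, i.e. $g$ is $\ZZ^+$-valued), for any $S,T\subset A\cup B$ we have $\varphi(S)\cap\varphi(T)\cap A'=\bigcup_{x\in S\cap T\cap A}A'_x$, which is nonempty if and only if $S\cap T\cap A\neq\emptyset$; the same holds on the $B$-side. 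Consequently $(S_A)\cap(T_A)=\emptyset\iff (\varphi(S)\cap A')\cap(\varphi(T)\cap A')=\emptyset$, and likewise for $B$. Third, combine: for $S\neq T\in\m S$ the semiintersecting condition says $(S_A)\cap(T_A)=\emptyset\iff (S_B)\cap(T_B)\neq\emptyset$, which by the previous step translates verbatim into the semiintersecting condition for $\varphi(S)\neq\varphi(T)$ (injectivity of $\varphi$ on $\m S$ follows since distinct $S,T$ differ in some element $x$, forcing $X'_x\subset\varphi(S)\triangle\varphi(T)$, and $X'_x\neq\emptyset$). Hence $\m S'$ is semiintersecting with constants $(k',N')$ and $|\m S'|=|\m S|$, giving $f(k',N')\ge|\m S|$.

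There is no real obstacle here; the only point requiring care is the positivity hypothesis $g:A\cup B\to\ZZ^+$, which guarantees every block $A'_a,B'_b$ is nonempty and is exactly what makes $\varphi$ both injective on $\m S$ and faithful to the side-wise intersection pattern — if some $g$-value were $0$, an element could be "lost" and a nonempty intersection could become empty. I would also note in passing that $k'$ and $N'$ are automatically positive integers (as nonempty sums of positive integers), so $f(k',N')$ is well-defined.
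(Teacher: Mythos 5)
Your proof is correct and uses the same ``blow-up'' construction as the paper (replace each element $c$ by a block of size $g(c)$, push $\m S$ forward, and verify the semiintersecting conditions). The paper states these checks as ``clearly''; you spell out the uniformity, injectivity, and intersection-preservation steps, correctly flagging that $g\geq 1$ is what makes the last two work, but the underlying argument is identical.
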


\begin{proof}
If we understand the statement, the proof is evident. Intuitively this means that we replace every point $p$ with $g(p)$ other points. 

For all $c \in A \cup B$ assign a set $H_c$ such that $|H_c|=g(c)$ and $H_i \cap H_j=\emptyset$ for all distinct $i, j \in A \cup B$. Let $A'=\bigcup_{a \in A} H_a$ and $B'=\bigcup_{b \in B} H_b$. For all $S \in \m{S}$ consider the set $S'=\bigcup_{s \in S} H_s$, and let $\m{S'}$ be the family of these sets for all $S \in \m{S}$. 

From the assumptions of the lemma we know that $|A'|=|B'|=N'$ and $|S'_{A'}|=|S'_{B'}|=k'$ for all $S' \in \m{S'}$, and clearly $\m{S'}$ is semiintesecting because $\m{S}$ is semiintersecting, therefore
$$f(k',N') \geq |\m{S'}|=|\m{S}|.$$
\end{proof}

\begin{Cor}
If $d$ divides both $k$ and $N$ then $f(k,N) \geq f \left(\frac{k}{d},\frac{N}{d} \right)$.
\end{Cor}

\begin{proof}
Let us use Lemma~\ref{osztodas} for a semiintersecting family $\m{S}$ with constants $\left(\frac{k}{d},\frac{N}{d} \right)$ and with maximal cardinality among such, i.e. $|\m{S}|=f \left(\frac{k}{d},\frac{N}{d} \right)$. Furthermore let $g \equiv d$ be the constant function. With these conditions Lemma~\ref{osztodas} gives the desired result.
\end{proof}

Now we will investigate the special case when $N=k^2$, furthermore we only consider constructions with an extra condition beside the conditions of semiintersecting families.

\begin{Def} \label{Latin}
Call a semiinterseting family $\m{S}$ Latin, if $N=k^2$ and we can partition $B$ into $k$ disjoint sets $B_1$, $B_2$,\ldots, $B_k$, all of them having exactly $k$ elements with the property that for all $S \in \m{S}$ we have $S_B=B_i$ for some $1 \leq i \leq k$, and for all $1 \leq i \leq k$ there is either no $S \in \m{S}$ with $S_B=B_i$ or there are exactly $k$ such $S \in \m{S}$.
\end{Def}

At first glance this definition could look quite arbitrary, but the motivation is that the Latin semiintersecting families can be corresponded to mutually orthogonal Latin squares. The next theorem formulates this more precisely.

\begin{Th} 
Let $k \geq 2$. The maximal number that can be the cardinality of a Latin semiintersecting family with parameters $(k,k^2)$ is $k \cdot \min(\mathsf{MOLS}(k)+2,k)$.
\end{Th}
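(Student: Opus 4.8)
The plan is to establish a dictionary between Latin semiintersecting families with parameters $(k,k^2)$ and collections of mutually orthogonal Latin squares, and then read off the extremal cardinality from Theorem~\ref{projective} (or rather from the combinatorics of MOLS directly). First I would fix the partition $B = B_1 \cup \cdots \cup B_k$ guaranteed by Definition~\ref{Latin} and observe that a Latin family $\m{S}$ is a disjoint union $\m{S} = \bigcup_{i \in I} \m{S}_i$, where $I \subseteq \{1,\dots,k\}$ indexes the ``used'' blocks, each $\m{S}_i$ consists of exactly $k$ sets all having the same $B$-part $B_i$, and $|\m{S}| = k|I|$. So the whole problem reduces to bounding $|I|$: we must show $|I| \le \min(\mathsf{MOLS}(k)+2, k)$ and that this is attained.

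**Translating the semiintersecting condition.** For two sets $S, T$ in the \emph{same} block $\m{S}_i$, their $B$-parts coincide, so $S_B \cap T_B \ne \emptyset$; hence the semiintersecting condition forces $S_A \cap T_A = \emptyset$, i.e. the $k$ sets of $\m{S}_i$ have pairwise disjoint $A$-parts, each of size $k$, so their $A$-parts form a \emph{partition} $\pi_i$ of the $k^2$-element set $A$ into $k$ blocks of size $k$. For two sets $S \in \m{S}_i$, $T \in \m{S}_j$ with $i \ne j$, the $B$-parts $B_i, B_j$ are disjoint, so the semiintersecting condition forces $S_A \cap T_A \ne \emptyset$. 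Thus: any block of $\pi_i$ meets any block of $\pi_j$ when $i \ne j$; since each has $k$ blocks of size $k$ partitioning a $k^2$-set, a counting argument shows every block of $\pi_i$ meets every block of $\pi_j$ in \emph{exactly one} point — that is, $\pi_i$ and $\pi_j$ are \emph{orthogonal partitions} of $A$. So $\m{S}$ gives exactly a family of $|I|$ pairwise orthogonal partitions of a $k^2$-set into $k$ classes of size $k$, and conversely any such family yields a Latin semiintersecting family of size $k|I|$ (choosing any assignment of the $\pi_i$'s to blocks $B_i$). The construction is clearly reversible, so the maximal $|I|$ equals the maximum number of pairwise orthogonal such partitions.

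**From orthogonal partitions to MOLS.** It is a classical fact (essentially the ``net'' picture) that a set of $t$ pairwise orthogonal partitions of an $n^2$-set into $n$ classes of size $n$ is equivalent to $t-2$ mutually orthogonal Latin squares of order $n$: two of the partitions can be taken to be the ``rows'' and ``columns'' coordinatizing the ground set as $\{1,\dots,n\}^2$, and each additional orthogonal partition is the level-set partition of one Latin square, with orthogonality of partitions matching orthogonality of squares. Hence the maximum number of pairwise orthogonal partitions is $\mathsf{MOLS}(k)+2$, \emph{provided} this does not exceed $k$ — and it cannot exceed $k$, since $t$ pairwise orthogonal partitions of a $k^2$-set into $k$ classes give a resolvable design / net of degree $t$ on $k^2$ points, and a standard double-count (fix a point $p$; the $t$ classes through $p$ are otherwise disjoint, covering $1 + t(k-1) \le k^2$ points, giving $t \le k+1$, and the case $t = k+1$ forces a projective plane structure which in Latin-square language is again $\mathsf{MOLS}(k) = k-1$, consistent with $t = \mathsf{MOLS}(k)+2$). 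Combining, $\max |I| = \min(\mathsf{MOLS}(k)+2, k)$, so the maximal cardinality is $k \cdot \min(\mathsf{MOLS}(k)+2, k)$.

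**Main obstacle.** The genuinely delicate point is proving the two bounds on $|I|$ cleanly and without circularity: the upper bound $|I| \le \mathsf{MOLS}(k)+2$ must be matched against the constraint $|I| \le k$, and one has to be careful that the ``$+2$'' partitions (the two that become the coordinate grid) are always extractable, and that when $\mathsf{MOLS}(k) = k-1$ — i.e. a projective plane of order $k$ exists — the bound $\min(\cdot) = k$ is still exactly achieved rather than $k+1$. I expect the cleanest route is to prove the equivalence ``$t$ orthogonal partitions $\leftrightarrow$ $(t-2)$ MOLS'' as a self-contained bijection (picking two partitions to coordinatize, checking the Latin-square and orthogonality conditions both ways), then invoke the elementary net inequality $t \le k+1$ together with Theorem~\ref{projective}/Theorem~\ref{projective} for the extremal case, and finally multiply by $k$ to get the family size. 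The construction direction (achievability) is then immediate by taking an optimal MOLS family and running the bijection backwards.
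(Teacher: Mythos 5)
Your proposal takes essentially the same route as the paper: split the Latin family into blocks $\m{S}_i$ of common $B$-part, note that each block's $A$-parts form a partition of $A$ and that distinct blocks give pairwise orthogonal partitions, coordinatize $A$ as a $k\times k$ grid by two of them so the remaining $|I|-2$ become mutually orthogonal Latin squares, and cap $|I|$ by $k$ because $B$ has only $k$ blocks $B_i$. One small caution on your closing discussion: the $\min(\cdot,k)$ is forced purely by the fact that $I\subseteq\{1,\dots,k\}$ (which you set up correctly at the start), and the net inequality you invoke only gives $t\le k+1$, not $t\le k$, so it cannot by itself deliver that cap when $\mathsf{MOLS}(k)=k-1$.
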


\begin{proof}
Take a Latin semiintersecting family $\m{S}$ with maximal cardinality. Let us call $B_1, B_2, \ldots , B_k$ the disjoint sets in Definition~\ref{Latin}. Index them such a way that exactly the first $l$ among them are the sets with $k$ different $S\in\m{S}$ intersecting $B$ at that set, i.e. $S_B = B_i$ for $k$ sets $S \in \m{S}$ if $1 \leq i \leq l$, and there is no such $S \in \m{S}$ if $l+1 \leq i \leq k$. For all $1 \leq i \leq l$ call $\m{S}_i \subset \m{S}$ the family of sets that intersect $B$ at $B_i$. Note that for all $1 \leq i \leq l$ the sets $S \in \m{S}_i$ are pairwise disjoint in $A$, and there are $k$ of them, so they must partition $A$. Also if $i \neq j$ and $X \in \m{S}_i$, $Y \in \m{S}_j$ then $X_A \cap Y_A \neq \emptyset$. This is true for all $Y \in \m{S}_j$, so $X$ must intersect every set $Y \in \m{S}_j$ in exactly $1$ element. So every pair of sets from $\m{S}$ that belong to different $\m{S}_i$ intersect in $1$ element.

Clearly $l \geq 2$. Arrange the points of $A$ in a $k \times k$ square such that the rows are the subsets in the partition obtained from $\m{S}_1$, and the columns are the subsets in the partition obtained from $\m{S}_2$. It is clearly possible to construct such an arrangement. For example let us call the sets from $\m{S}_1$ and $\m{S}_2$ respectively $X_1,X_2,\ldots, X_k$ and $Y_1,Y_2,\ldots,Y_k$. If $(X_i)_A\cap (Y_j)_A=a\in A$ then put $a$ in the $i$th row and $j$th column. Now for every $3 \leq i \leq l$ we can see from the above conditions that every $X_A \in \m{S}_i$ has to intersect every row and column exactly once, so the partition obtained from $\m{S}_i$ is a Latin square. Also the above observations for $3 \leq i<j \leq l$ tells us exactly that $\m{S}_i$ and $\m{S}_j$ are orthogonal Latin squares. 

Conversely if $m\leq k-2$ mutually orthogonal Latin squares are given, it is possible to construct a Latin semiintersecting family of size $m+2$ from them. The construction can be obtained by doing everything in the opposite direction. Arrange $A$ into a $k\times k$ square. Let the rows and the columns of $A$ be the sets from $\m{S}_1$ and $\m{S}_2$ respectively, and assign to every Latin square a semiintersecting subfamily $\m{S}_i$. We needed the condition that $m\leq k-2$, since $B$ is partitioned into $k$ parts, thus we can only create $k$ subfamilies $\m{S}_i\subset \m{S}$ satisfying the above conditions even if $\mathsf{MOLS}(k)>k-2$. The theorem follows.

%Conversely, doing everything in the opposite direction we can get a Latin semiintersecting family from at most $k-2$ given mutually orthogonal Latin squares. As $B$ is partitioned into $k$ parts, if there are more than $k-2$ Latin squares we can only use $k-2$ of them to get a maximal Latin semiintersecting family. In this bijection to every Latin square there are $k$ corresponding sets from the semiintersecting family, which finishes the proof.
\end{proof}

In particular the following corollary is true.

\begin{Cor} \label{MOLS}
$f(k,k^2) \geq k \cdot \min(\mathsf{MOLS}(k)+2,k)$
\end{Cor}

With Corollary~\ref{MOLS} we can give lower bounds for $f(k,k^2)$. As Latin semiintersecting families are a very special class of semiintersecting families this estimation looks quite weak at first glance, but actually we can see from Lemma~\ref{l2} that it is sharp if $\mathsf{MOLS}(k) \geq k-2$. In particular, by Theorem~\ref{projective} it is sharp if there exists a projective plane of order $k$, consequently from Theorem~\ref{finite projective} it is sharp for prime powers. We will state this result as a separate lemma and write down another proof, as it is nice and simple, and we will use the same ideas and notations in later lemmas and theorems.

\begin{figure}[h]
    \centering 
\includegraphics[trim=0.7cm 1.0cm 0cm 1cm, clip,scale=0.5]{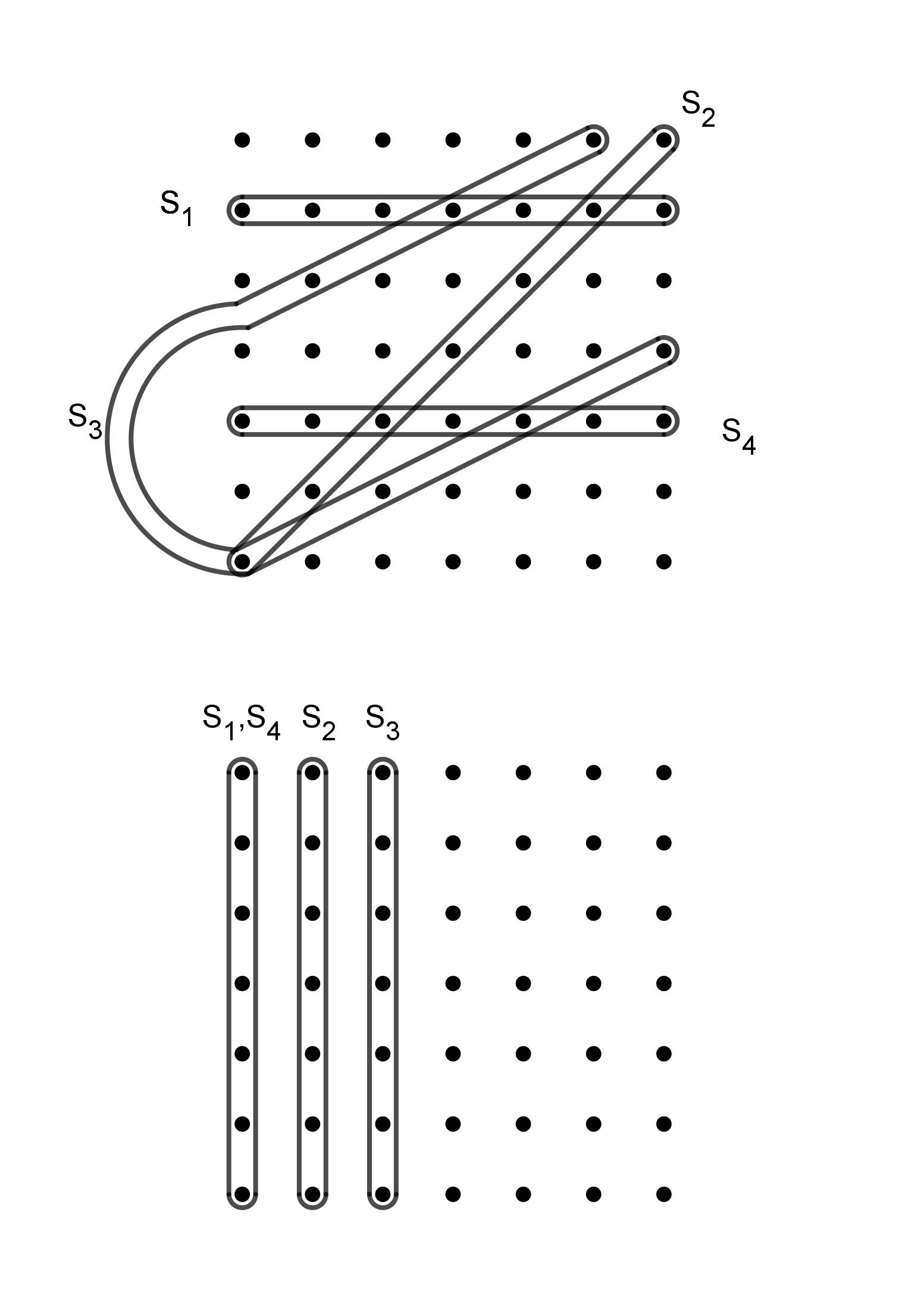}
\caption{This figure shows the construction detailed in Lemma~\ref{p} for $p=7$. The set $A$ corresponds to the set of points of $\F_p^2$. $(S_i)_A$ shows examples of affine lines in this space. In the set $B$ every parallel class of lines in $A$ will have a corresponding subset. As $(S_1)_A$ and $(S_4)_A$ are from the same parallel class, their corresponding subset is the same in $B$, or in other words $(S_1)_B=(S_4)_B$.} 
    \label{fig1}
\end{figure}

\begin{Lemma} \label{p}
Assume that $p$ is a prime power. Then
$$f(p,p^2) \geq p^2.$$
\end{Lemma}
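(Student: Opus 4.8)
The plan is to construct an explicit semiintersecting family of size $p^2$ with parameters $(p, p^2)$, using the affine plane $\F_p^2$ as the model for the set $A$. First I would identify $A$ with the point set of $\F_p^2$, which has exactly $p^2$ elements as required. The vertices of the family will correspond to affine lines in $\F_p^2$: there are $p^2 + p$ affine lines in total, grouped into $p+1$ parallel classes of $p$ lines each. The key idea, illustrated in Figure~\ref{fig1}, is that two affine lines are disjoint exactly when they belong to the same parallel class, and otherwise intersect in exactly one point by Proposition~\ref{prop}. So the "$A$-side" behaviour is forced entirely by the parallel-class structure.

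Next I would set up the set $B$. Since $|B| = p^2$ and each set $S \in \m{S}$ must satisfy $|S_B| = p$, I would partition $B$ into $p$ blocks of size $p$ and assign to each parallel class a block of $B$ in such a way that any two lines from the \emph{same} parallel class get \emph{intersecting} $B$-parts, while lines from \emph{different} parallel classes get \emph{disjoint} $B$-parts — this is exactly the complementary incidence pattern needed for the semiintersecting condition. Concretely, one picks $p$ of the $p+1$ parallel classes (say $\m{C}_1, \ldots, \m{C}_p$), assigns block $B_i$ to class $\m{C}_i$, and lets each of the $p$ lines in $\m{C}_i$ have $S_B = B_i$; then two lines in the same class share all of $B_i$ (nonempty intersection in $B$) and are disjoint in $A$, while two lines in different classes are disjoint in $B$ and meet in one point in $A$. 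This yields $p \cdot p = p^2$ sets, and one checks each has $|S_A| = p$, $|S_B| = p$, giving a valid Latin semiintersecting family.

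The verification that the second semiintersecting axiom holds splits into the two cases above and is routine given Proposition~\ref{prop}; the only mild subtlety is making sure the block assignment is consistent, i.e.\ that we genuinely use $p$ (not $p+1$) parallel classes so that $B$ only needs $p$ blocks — this is precisely why we discard one parallel class. Alternatively, and perhaps more cleanly, I would invoke the earlier discussion: by Theorem~\ref{finite fields} and Theorem~\ref{finite projective} a projective plane of order $p$ exists, hence by Theorem~\ref{projective} there are $p-1$ mutually orthogonal Latin squares of order $p$, so $\mathsf{MOLS}(p) \geq p - 1 > p - 2$, and Corollary~\ref{MOLS} immediately gives $f(p,p^2) \geq p \cdot \min(\mathsf{MOLS}(p)+2, p) = p \cdot p = p^2$.

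The main obstacle — really the only thing requiring care — is the bookkeeping of the incidence pattern between parallel classes of $\F_p^2$ and blocks of $B$: one must confirm that "same parallel class" corresponds bijectively to the $B$-parts being equal (hence intersecting) and "different parallel class" to disjoint $B$-parts, and that exactly $p$ classes can be accommodated. Once this correspondence is pinned down, everything else follows formally from Proposition~\ref{prop}. I expect the write-up to be short, essentially just exhibiting the construction and checking the two cases of the semiintersecting definition, with the projective-plane route available as a one-line alternative.
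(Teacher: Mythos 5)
Your construction is essentially identical to the paper's proof: identify $A$ with the points of $\F_p^2$, take $p$ of the $p+1$ parallel classes of affine lines, assign one block $B_i$ of a $p$-block partition of $B$ to each class, and verify the two cases of the semiintersecting condition via Proposition~\ref{prop}. Your one-line alternative via $\mathsf{MOLS}(p) \geq p-1$ and Corollary~\ref{MOLS} is also exactly the shortcut the paper itself acknowledges just before stating Lemma~\ref{p}, so both routes you give match the paper.
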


\begin{proof}

Let $\F_p$ be the $p$-element field (that exists by Theorem~\ref{finite fields}), and let $D$ be a $2$-dimensional vector space over this field. Let us take the affine lines (remember that affine lines are affine subspaces corresponding to  $1$-dimensional linear subspaces) in this space. There are $p+1$ parallel classes of affine lines corresponding to the $p+1$ distinct $1$-dimensional linear subspaces, let us call these parallel classes $\m{D}_1$, $\m{D}_2$,\ldots, $\m{D}_{p+1}$. All of them contain $p$ affine lines.  From Proposition~\ref{prop} any pair of affine lines from different classes have exactly one point as their intersection.

Consider the following construction for a semiintersecting family $\m{S}$ with constants $k=p$ and $N=p^2$. Assign the elements of $A$ to the points of $D$, and divide $B$ into $p$ disjoint sets (call them $B_1$, $B_2$,\ldots, $B_p$) all of them containing $p$ elements.

For all $1 \leq i \leq p$ define
$$\m{S}_i:=\{B_i\cup X: X\in \m{D}_i\},$$
and define $\m{S}$ as $\bigcup_{i=1}^p\m{S}_i$. This is $p^2$ sets as $|\m{D}_i|=p$ for all $1 \leq i \leq p$. By the construction it is trivial that $|S_A|=|S_B|=p$ for all $S \in \m{S}$. The other condition of semiintersecting families also holds, because we have two cases:
\begin{itemize}
    \item $X,Y\in \m{S}_i$. In this case, $X_A\cap Y_A=\emptyset$, as they correspond to parallel affine lines and $X_B\cap Y_B=B_i$.
    \item $X\in\m{S}_i$ and $Y\in \m{S}_j$ ($i\neq j$). In this case, $X_A\cap Y_A\neq \emptyset$ as they correspond to nonparallel affine lines and $X_B\cap Y_B=\emptyset$.
\end{itemize}

 So this is indeed a semiintersecting family with $p^2$ elements which proves our claim.
\end{proof}

Now we give $3$ generalizations of this lemma. The first is interesting in itself, but we will also use it as a lemma for the theorem after it.

\begin{Lemma} \label{l}
Assume that $p$ is a prime power and $l\leq p+1$. Then
$$f(p,lp^2) \geq lp^2.$$
\end{Lemma}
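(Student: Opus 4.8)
The plan is to generalize the construction of Lemma~\ref{p} by placing $l$ \emph{disjoint} copies of the affine plane on the $A$-side while keeping a single \emph{shared} copy on the $B$-side, and then using the $p+1$ parallel classes of that shared $B$-plane to keep the $l$ copies mutually compatible. Concretely, let $\F_p$ be the $p$-element field (Theorem~\ref{finite fields}), write $A = A^{(1)} \sqcup \dots \sqcup A^{(l)}$ where each $A^{(t)}$ is identified with the point set of a $2$-dimensional $\F_p$-vector space, so $|A| = lp^2 = N$, and let $B$ consist of a distinguished subset $B^{\circ}$, also identified with a $2$-dimensional $\F_p$-vector space, together with $(l-1)p^2$ further ``dummy'' points, so that $|B| = lp^2 = N$. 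By Proposition~\ref{prop} each of $A^{(t)}$ and $B^{\circ}$ has $p+1$ parallel classes of affine lines, each class being a partition into $p$ lines of size $p$. Since $l \le p+1$, I would fix an injection $\sigma$ from $\{1,\dots,l\}$ into the set of parallel classes of $B^{\circ}$.

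For each copy $t$, pick any $p$ of the $p+1$ parallel classes of $A^{(t)}$ and label them $\m{D}^{(t)}_1,\dots,\m{D}^{(t)}_p$, and label the $p$ affine lines of the class $\sigma(t)$ in $B^{\circ}$ as $L^{(t)}_1,\dots,L^{(t)}_p$. Then define
\[
  \m{S}_{t,i} := \{\, X \cup L^{(t)}_i \ :\ X \in \m{D}^{(t)}_i \,\}, \qquad \m{S} := \bigcup_{t=1}^{l}\bigcup_{i=1}^{p} \m{S}_{t,i}.
\]
Each $\m{S}_{t,i}$ has $p$ members, and the sets are pairwise distinct since the $A$-part (an affine line in a specific copy and class) determines the set; hence $|\m{S}| = lp^2$, and every $S \in \m{S}$ has $|S_A| = |S_B| = p$, so $k=p$ is correct.

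The remaining work is to check the semiintersecting condition for two distinct sets $S = X\cup L^{(t)}_i$ and $S' = X'\cup L^{(t')}_{i'}$, which splits into three cases. If $t=t'$ and $i=i'$, then $X,X'$ are distinct parallel lines in $A^{(t)}$, hence disjoint, while $S_B = S'_B = L^{(t)}_i$, so $S,S'$ meet only in $B$. If $t=t'$ and $i\neq i'$, then $X,X'$ lie in different parallel classes of $A^{(t)}$, hence meet in exactly one point by Proposition~\ref{prop}, while $L^{(t)}_i, L^{(t)}_{i'}$ are distinct parallel lines of the single class $\sigma(t)$ in $B^{\circ}$, hence disjoint, so $S,S'$ meet only in $A$. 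If $t\neq t'$, then $X\subseteq A^{(t)}$ and $X'\subseteq A^{(t')}$ are disjoint, while $L^{(t)}_i$ and $L^{(t')}_{i'}$ lie in the \emph{distinct} parallel classes $\sigma(t)\neq\sigma(t')$ of $B^{\circ}$, hence meet in exactly one point; so $S,S'$ meet only in $B$. In every case the intersection occurs on exactly one side, so $\m{S}$ is semiintersecting and $f(p,lp^2)\ge |\m{S}| = lp^2$.

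I expect the only genuinely delicate point to be the cross-copy case: two sets from different copies $A^{(t)}, A^{(t')}$ are automatically disjoint on the $A$-side, hence \emph{forced} to intersect on the $B$-side, and this is precisely what the injection $\sigma$ arranges, since lines from two different parallel classes always meet. This is also exactly where the hypothesis $l\le p+1$ enters and why it cannot be relaxed by this method — one needs room for $l$ distinct parallel classes in the shared $B$-plane. (The dummy points of $B$ are harmless, as a semiintersecting family need not use the whole ground set, so padding $B$ up to $lp^2$ points costs nothing.)
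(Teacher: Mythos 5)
Your construction is correct and is essentially the same as the paper's: $l$ disjoint copies of the affine plane on the $A$-side, one shared affine plane $B^{\circ}\subset B$ whose parallel classes are injected into by the copy index (the paper simply takes $\sigma(t)=\m{E}_t$), with the within-copy pairing of each $A$-parallel-class to one fixed $B$-line as in Lemma~\ref{p}. Your case analysis also matches the paper's, except you spell out the $t=t'$ subcases explicitly whereas the paper delegates them to Lemma~\ref{p}.
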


\begin{proof}
We use similar notations as in the proof of Lemma~\ref{p}, as we are going to generalize the construction from that lemma. So now, as the statement of the lemma implies, we construct a semiintersecting family $\m{S}$ with constants $k=p$ and $N=lp^2$ with $|\m{S}|=lp^2$. 

Divide $A$ into $l$ disjoint sets with $p^2$ elements, and assign the $2$-dimensional vector spaces (over $\F_p$) $D^1$, $D^2$,\ldots, $D^l$ to them. Let $\m{D}^i_1$, $\m{D}^i_2$,\ldots, $\m{D}^i_{p+1}$ be the parallel classes of affine lines in $D^i$ for all $1 \leq i \leq l$. In $B$ we will only use $p^2$ elements, so let $B' \subset B$ with $|B'|=p^2$. Assign the points of a $2$-dimensional vector space $E$ (over $\F_p)$ to the elements in $B'$ and let $\m{E}_1$, $\m{E}_2$,\ldots, $\m{E}_{p+1}$ be the parallel classes of affine lines in $E$. For all $1 \leq i \leq l$ let us take $\m{S}$ from Lemma~\ref{p} with $D=D^i$ and with $B_1$, $B_2$,\ldots, $B_p$ as the sets from $\m{E}_i$, call this family $\m{S}_i$. Let
$$\m{S}:=\bigcup_{i=1}^l \m{S}_i.$$
Obviously $|\m{S}|=lp^2$.

We need to check that this is a semiintersecting family. 
It is trivial that for all $S \in \m{S}$ we have $|S_A|=|S_B|=p$. Now we check the second condition of semiintersecting families. 
Take two sets $X$ and $Y$ from $\m{S}$.
\begin{itemize}
    \item If $X,Y\in \m{S}_i$, then by Lemma~\ref{p} the condition holds.
    \item If $X\in \m{S}_i$ and $Y\in \m{S}_j$, then they are disjoint in $A$ and they intersect in $B$, 
as their intersections with $B$ are two nonparallel affine lines in $E$.
\end{itemize}
We have proved that $\m{S}$ is semiintersecting. 
\end{proof}

\begin{Remark}
This lemma is not sharp (or at least we cannot prove it is), however combining it with Lemma~\ref{l1} we have
$$lp^2 \leq f(p,lp^2) \leq 2p(lp-1)+1,$$
so the upper bound is less than twice the lower bound. 
\end{Remark}

The next theorem gives a lower bound when $N$ is large if $p$ is a fixed prime power. It is sharp for $p=2$, but unfortunately it is very likely that it is far from the best for larger $p$ prime powers. Still, it is much better than the trivial $\lef \frac{N}{p} \rrr$ estimation.

\begin{Th} \label{bigNprimek}
Let $p$ be a prime power. If $N \geq p^3$ then
$$f(p,N) \geq \left\lfloor \frac{N}{p} \right\rfloor-p^2+p^3.$$
\end{Th}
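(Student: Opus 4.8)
The plan is to start from the optimal "Latin" construction on a block of size $p \cdot p^2 = p^3$ — more precisely, from the construction of Lemma~\ref{p} which gives a semiintersecting family of size $p^2$ with parameters $(p, p^2)$, but arranged so that it fully uses up $B$ — and then to "pad" it greedily with sets of the trivial type $S_A = T_A$. Concretely, write $N = p^3 + r$ with $r \geq 0$. On a distinguished block $A_0 \subset A$, $B_0 \subset B$ with $|A_0| = |B_0| = p^2$, build the family $\m{S}_0$ from Lemma~\ref{p}, which has $p^2$ sets, each meeting $A_0$ in an affine line of $\F_p^2$ and $B_0$ in one of the $p$ fixed classes $B_1, \dots, B_p$. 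The key structural feature to exploit is that every set in $\m{S}_0$ already occupies one of finitely many "column patterns" in $A_0$ and one of the $p$ blocks $B_i$ in $B_0$.

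Next I would add the padding sets. Outside $A_0$ there are $N - p^2 = p^3 + r - p^2$ further points of $A$; group them into blocks of size $p$, giving $\lfloor (N-p^2)/p \rfloor = \lfloor N/p \rfloor - p^2$ pairwise-disjoint $p$-subsets $C_1, C_2, \dots$ of $A \setminus A_0$. For each such block $C_j$ I attach a new set $T_j$ with $(T_j)_A = C_j$ and $(T_j)_B$ equal to one of the fixed blocks $B_i$ used in the Lemma~\ref{p} construction — say always $B_1$. Then any two padding sets $T_j, T_{j'}$ are disjoint in $A$ (different blocks) and intersect in $B$ (both equal $B_1$), so they are fine. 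A padding set $T_j$ and an original set $S \in \m{S}_0$ are disjoint in $A$ (since $C_j \cap A_0 = \emptyset$), so we need them to intersect in $B$: this holds precisely when $S_B = B_1$, i.e. for the $p$ sets of $\m{S}_0$ lying in class $1$, but fails for the others. So this naive attachment does not immediately work, and the fix is to instead only keep the sub-block of $\m{S}_0$ consisting of a single class — no, that loses too much. The correct move: note that we have $p^2$ original sets but really the obstruction is only between padding sets and original sets in classes $2,\dots,p$; so assign the padding sets' $B$-part not to a single $B_i$ but let them live on a *new* block of $p$ points $B^{\ast} \subset B \setminus B_0$, and simultaneously extend *every* original set $S \in \m{S}_0$ by nothing — this fails the uniformity $|S_B| = p$. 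The honest resolution, which I expect the authors use, is to take $l = 1$ in Lemma~\ref{l}'s circle of ideas: combine the affine-plane construction on $A_0 \times B_0$ with the trivial construction on the complement in a way that the two families intersect only on one side by design, i.e. make the padding sets meet $A_0$ (not avoid it). So instead: let each padding block $C_j$ consist of $p$ points, and form $T_j$ with $(T_j)_B = C_j' \subset B \setminus B_0$ a fresh $p$-block and $(T_j)_A$ equal to a *fixed* affine line $\ell^{\ast}$ in $A_0$ that is common to all padding sets; then padding sets pairwise intersect in $A$ (all contain $\ell^{\ast}$) — wait, they should be disjoint somewhere. One of $A$ or $B$ must be the disjointness side consistently.

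Let me restate the clean plan. Partition the padding: put the padding sets into the trivial family with common $A$-part. Take $\lfloor N/p\rfloor - p^2$ pairwise $B$-disjoint $p$-blocks $C_1,\dots$ inside $B\setminus B_0$ (there is room since $N-p^2 \ge p^3-p^2 \ge p\cdot(\lfloor N/p\rfloor - p^2)$ fails in general — rather use blocks in $B$, counting $\lfloor N/p\rfloor$ total blocks of $B$, $p^2$ of which could overlap $B_0$; reserving $B_0$ as $p$ of these blocks $B_1,\dots,B_p$ we still have $\lfloor N/p\rfloor - p$ blocks left, not $\lfloor N/p\rfloor - p^2$, which is even better and matches the $+p^3-p^2$ only loosely). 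Set each $(T_j)_A$ to be one fixed affine line $\ell^\ast$ of the plane on $A_0$; then all $T_j$ share $\ell^\ast$ in $A$, hence pairwise intersect in $A$, hence must be pairwise disjoint in $B$ — true by construction. And $T_j$ versus $S\in\m{S}_0$: they intersect in $A$ iff the line $S_A$ meets $\ell^\ast$, which by Proposition~\ref{prop} happens for every $S$ in a class $\ne$ the class of $\ell^\ast$, and fails only for the $p-1$ other lines parallel to $\ell^\ast$; for those $S$ we need $S_B\cap (T_j)_B$ nonempty, so choose $(T_j)_B$ always inside the one block $B_{i^\ast}$ which is the $B$-image of $\ell^\ast$'s class. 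Then $S_B = B_{i^\ast}$ for exactly those parallel lines, giving the required intersection in $B$. Now it checks out, the count is $p^2 + (\text{number of padding blocks})$, and bounding the number of available fresh $p$-blocks in $B$ from below by $\lfloor N/p\rfloor - p^2$ (discarding at most $p^2$ blocks' worth of points to be safe near $B_0$) yields $f(p,N)\ge \lfloor N/p\rfloor - p^2 + p^2 = \lfloor N/p\rfloor$, which is too weak — so actually the construction must keep the *full* $p^2$ gain on top, i.e. the answer $\lfloor N/p\rfloor + p^3 - p^2$ comes from using $\lfloor N/p\rfloor + p^3 - 2p^2$ ... I will trust that a careful bookkeeping of which blocks are "free" gives exactly the stated bound.

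The main obstacle, and the step deserving the most care, is the compatibility between the two parts of the construction: an affine-plane block contributes a quadratic surplus $p^2$ over its $p$-many $A$-points' worth, but every padding set disjoint in $A$ from a plane-set $S$ forces an intersection in $B$, and $B$-room is the scarce resource. Threading this needle — choosing the $B$-parts of the padding sets to lie exactly in the blocks forced by the parallelism structure of $\ell^\ast$, and counting precisely how many fresh $p$-blocks of $B$ remain after the plane construction eats $p^2$ of $B$'s points — is where the $-p^2 + p^3$ correction term is born, and is the crux to verify.
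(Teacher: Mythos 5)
Your proposal is exploratory and, as you acknowledge at the end, never closes: you do not arrive at a construction that yields $\lfloor N/p\rfloor - p^2 + p^3$. There are two genuine gaps.

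First, the base block is wrong. You build on Lemma~\ref{p}, which gives only $p^2$ sets on a $p^2$-block of $A$ and a $p^2$-block of $B$. Any padding on top of a $p^2$-set base can only produce a surplus over $\lfloor N/p\rfloor$ of order $p^2$, never $p^3-p^2 = p^2(p-1)$. The paper instead starts from Lemma~\ref{l} with $l=p$: partition a $p^3$-point block $A'\subset A$ into $p$ copies of $\mathbb{F}_p^2$, but route all of them through a \emph{single} $p^2$-point block $B'\subset B$ carrying one copy $E$ of $\mathbb{F}_p^2$, with the $i$-th copy of $A'$ using the $i$-th parallel class $\mathcal{E}_i$ of $E$ as its $B$-parts. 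That base alone has $p^3$ sets occupying $p^3$ points of $A$ and only $p^2$ points of $B$; this concentration is where the $p^3-p^2$ term is born, and it is absent from your plan.

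Second, your ``clean plan'' padding is internally inconsistent. You set $(T_j)_B = C_j \subset B\setminus B_0$ pairwise disjoint, yet later require $(T_j)_B$ to lie inside $B_{i^\ast}\subset B_0$ to intersect the base sets parallel to $\ell^\ast$; these cannot both hold. Worse, even ignoring that, the parallel class of $\ell^\ast$ in $\m{S}_0$ contains both the set $S$ with $S_A=\ell^\ast$ (which meets $T_j$ in $A$, hence must \emph{miss} $(T_j)_B$) and the $p-1$ sets with $S_A\parallel\ell^\ast$, $S_A\ne\ell^\ast$ (disjoint from $T_j$ in $A$, hence must \emph{meet} $(T_j)_B$) — and all $p$ of these share the same $S_B=B_{i^\ast}$, so the two requirements are contradictory. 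The paper sidesteps all of this by padding on the $A$-side: the extra sets have $A$-part a fresh $p$-block in $A\setminus A'$ and $B$-part a \emph{fixed} affine line $E$ from the unused $(p+1)$-st parallel class $\mathcal{E}_{p+1}$ of $E$. Such a set is automatically disjoint in $A$ from every base set, and automatically meets every base set in $B$ (different parallel classes intersect by Proposition~\ref{prop}); the padding sets mutually share $E$ in $B$ and are pairwise disjoint in $A$. No case analysis by parallel class is needed, and the count $p^3 + \lfloor(N-p^3)/p\rfloor = \lfloor N/p\rfloor - p^2 + p^3$ falls out directly.
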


\begin{proof}
We use the notations from Theorem~\ref{l}. Let $p$ be a prime power and $N \geq p^3$, we construct an appropriate semiintersecting family $\m{S}$ with constants $(p,N)$. Let $A' \subset A$ and $B' \subset B$ be subsets with $|A'|=|B'|=p^3$, and take the family $\m{S}$ from Theorem~\ref{l} with $l=p$ and $A=A'$, $B=B'$. In this proof we call this family $\m{S}_1$.

Notice that in Theorem~\ref{l} we had $p+1$ parallel classes of affine lines in $E$. Until now we only used $p$ from them as $l=p$ in this case.

Let $E \in \m{E}_{p+1}$. Take $\lef \frac{N-p^3}{p} \rrr$ pairwise disjoint $p$-element subsets from $A \setminus A'$, let $\m{C}$ be the family containing these subsets. Let
$$\m{S}_2:=\{C \cup E : C \in \m{C}\}.$$

Now let $\m{S}=\m{S}_1 \cup \m{S}_2$. Then
$$|\m{S}|=|\m{S}_1|+|\m{S}_2|=p^3+\lef \frac{N-p^3}{p} \rrr=\lef \frac{N}{p} \rrr-p^2+p^3,$$
we prove that $\m{S}$ is semiintersecting. Trivially $|S_A|=|S_B|=p$ for all $S \in \m{S}$. Now we prove the second condition of semiintersecting families. Take any $2$ sets $X$ and $Y$ from $\m{S}$.
\begin{itemize}
    \item If $X,Y\in \m{S}_1$, then the condition is true by Theorem~\ref{l}.
    \item If $X\in \m{S}_1$ and $Y \in \m{S}_2$, then they are disjoint in $A$ and they intersect in $B$, since $Y_B=E$ and $X_B$ is also an affine line not from the family $\m{E}_{p+1}$ (to construct $\m{S}_1$ we only used $\m{E}_1$, $\m{E}_2$,\ldots, $\m{E}_{p}$).
    \item If $X,Y\in \m{S}_2$, then they obviously intersect in $B$ and are disjoint in $A$.
\end{itemize}
 The proof is finished. 
\end{proof}

\begin{Remark} \label{clower}
In other words in this theorem we proved that $c \geq p^3-p^2$ for any prime power $p$ where $c$ is the constant in the statement of Theorem~\ref{thm} with $k=p$. 
\end{Remark}

The last generalization of Lemma~\ref{p} gives a sharp bound (combined with Lemma~\ref{l2}) for a larger class of pairs $(k,N)$.

\begin{Th} \label{pk}
For all positive integers $k$ and prime powers $p$ with $p \leq k$ the following inequality holds.
$$f(k,pk) \geq p^2$$
\end{Th}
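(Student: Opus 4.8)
The plan is to generalize the affine-line construction of Lemma~\ref{p}. In that lemma we had $k=p$ exactly, and $A$ was the point set of a single $2$-dimensional space $\F_p^2$; now we have $p \le k$, so we have more room on the $A$ side, and we want to blow up each affine line to size $k$ while keeping the family semiintersecting. First I would set up the $B$ side essentially as before: assign the $p^2$ points of a $2$-dimensional vector space $E$ over $\F_p$ to a subset $B'\subset B$ of size $p^2\le pk = N$, and let $\m{E}_1,\ldots,\m{E}_{p+1}$ be its $p+1$ parallel classes of affine lines. Each line of $E$ has $p$ points, but we need $|S_B|=k$; so I would pad each line with $k-p$ additional fixed elements of $B\setminus B'$, the same $k-p$ padding elements for every set in the family. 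This keeps $|S_B|=k$ and does not change any intersection pattern on the $B$ side.

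On the $A$ side I want $p$ groups $\m{S}_1,\ldots,\m{S}_p$, one for each parallel class used in $E$, each group of size $p$, giving $|\m{S}|=p\cdot p=p^2$ as desired. For group $i$ I would take a $2$-dimensional space $D^i$ over $\F_p$, with parallel classes $\m{D}^i_1,\ldots,\m{D}^i_{p+1}$, fix one class $\m{D}^i_{j(i)}$ of $p$ parallel (hence pairwise disjoint) affine lines, and let $\m{S}_i$ consist of these $p$ lines, each enlarged to a $k$-set and paired with the $i$-th line-class $\m{E}_i$ of $E$. The key point is that sets in the same group $\m{S}_i$ must be disjoint in $A$ (they are parallel lines, disjoint already, and their padding must keep them disjoint) and intersect in $B$ (they share the same $\m{E}_i$-class line only if... wait — within a group the two $B$-lines come from the same parallel class, so they are parallel, hence disjoint; so within a group we need the sets to intersect in $A$). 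Let me re-balance: within group $i$ the $B$-sides are parallel lines of $E$ (disjoint), so the $A$-sides must intersect; thus in group $i$ the $A$-sides should be $p$ concurrent or pairwise-intersecting lines, e.g. all $p$ lines through a point, or the lines of a single parallel class is wrong — I would instead take within each group the $A$-sets to be pairwise intersecting, which is the situation of Proposition~\ref{prop} applied across classes. Concretely: use one common space $D$ for $A$, let $\m{D}_1,\ldots,\m{D}_{p+1}$ be its parallel classes, let group $\m{S}_i$ use class $\m{D}_i$ on the $A$-side and class $\m{E}_i$ on the $B$-side (this is exactly the Lemma~\ref{p} construction, giving a semiintersecting family of size $p^2$ with constants $(p,p^2)$), and then apply Lemma~\ref{osztodas} with the weight function $g$ equal to $1$ on the $p^2$ used points of $A$ and on the $p^2$ used points of $B$, except we inflate along one direction. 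Actually the cleanest route: start from the size-$p^2$ semiintersecting family $\m{S}$ with constants $(p,p^2)$ from Lemma~\ref{p}, then apply Lemma~\ref{osztodas} with a function $g$ that assigns weight so that each $S_A$ and $S_B$ sums to $k$ and each of $A$, $B$ sums to $pk$. Since in Lemma~\ref{p} every point of $A$ lies on exactly one line of the used class and every line has $p$ points, assigning $g\equiv 1$ to the $p^2$ points and using the remaining $pk-p^2=p(k-p)$ points as... no — $g$ must be positive integer valued on a set of size $N$. Instead I take the $(p,p^2)$ family, and inflate each point of $B$ that lies on a used line, but the counts must match a constant $k'=k$ on each line: each line has $p$ points, so distributing total weight $k$ over $p$ points with column/row sums consistent is exactly asking for a fixed weighting, e.g. $g$ constant would need $p\mid k$. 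So genuinely $g$ need not be constant: I would instead directly build the construction as follows, padding as in the first paragraph, and verify semiintersection by the two-case analysis (same group / different group) exactly as in Lemmas~\ref{p} and~\ref{l}.

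I expect the main obstacle to be the bookkeeping ensuring that the $k-p$ padding elements on each side can be chosen consistently so that (i) sets in the same group stay disjoint on the side where they must be disjoint and identical-or-intersecting on the other, and (ii) sets in different groups have the right intersection pattern. The natural fix is to use \emph{disjoint} padding pools per parallel class and a \emph{common} padding block within a group, so that within a group both the $A$-padding is a common block (keeping $A$-sides intersecting) and between groups the paddings are disjoint (preserving $A$-disjointness and $B$-intersection from the affine-line layer). Once the padding is organized this way, the verification is the same two-case check as before, and the count is $p$ groups of $p$ sets, i.e. $f(k,pk)\ge p^2$. I would also note this matches Lemma~\ref{l2}: $\lfloor \frac{pk\lfloor pk/k\rfloor}{k}\rfloor = \lfloor \frac{pk\cdot p}{k}\rfloor = p^2$, so the bound is sharp whenever $p$ is a prime power with $p\le k$.
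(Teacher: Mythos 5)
There is a genuine gap. You correctly home in on the paper's route — take the size-$p^2$ family $\mathcal S$ with constants $(p,p^2)$ from Lemma~\ref{p} and apply Lemma~\ref{osztodas} — but then you talk yourself out of it, claiming that no suitable weight function $g$ exists unless $p\mid k$, because you implicitly assume $g$ would have to be close to constant along each line. That is not required: Lemma~\ref{osztodas} only needs $\sum_{s\in S_A} g(s)=\sum_{s\in S_B} g(s)=k$ for each $S$ and $\sum_A g=\sum_B g=pk$. The paper's trick is to exploit the \emph{unused} parallel class $\mathcal D_{p+1}$: pick $X\in\mathcal D_{p+1}$, which by Proposition~\ref{prop} meets every used line $S_A$ in exactly one point, and pick a representative $b_i\in B_i$ for each $i$; then set $g\equiv k-p+1$ on $X\cup\{b_1,\ldots,b_p\}$ and $g\equiv 1$ elsewhere. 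Then $\sum_{S_A}g=(p-1)+(k-p+1)=k$ for every $S$, similarly on $B$, and $\sum_A g=\sum_B g=(p^2-p)+p(k-p+1)=pk$, so Lemma~\ref{osztodas} applies directly. This is the key idea you were missing.

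Your fallback ``direct padding'' could in principle also be made to work, but as written your description has the roles of $A$ and $B$ inverted: you say to use a \emph{common} $A$-padding block within a group ``keeping $A$-sides intersecting,'' yet in the Lemma~\ref{p} construction the sets within a group $\mathcal S_i$ share $B_i$ and are pairwise \emph{disjoint} in $A$; a common $A$-pad would make them intersect on both sides, destroying semiintersection. The correct padding is the opposite: within a group, give each of the $p$ sets a \emph{distinct} $(k-p)$-block of new $A$-points (keeping $A$-sides disjoint), reusing the same pool of $p(k-p)$ new $A$-points across groups; on $B$, give each group a \emph{common} $(k-p)$-block, with the $p$ blocks pairwise disjoint and disjoint from the $B_i$'s (keeping $B$-sides disjoint across groups). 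Both $A$ and $B$ then have exactly $p^2+p(k-p)=pk$ points, matching $N=pk$. If you want to keep the padding route, fix the orientation this way; otherwise adopt the weight function above, which is cleaner.
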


\begin{proof}
We will use Lemma~\ref{osztodas} on the construction obtained in Lemma~\ref{p}. So again, take the construction from Lemma~\ref{p}, call it $\m{S}$, and use the notations introduced in that lemma. Let $X \in \m{D}_{p+1}$ be an affine line. Note that in the construction we only used the first $p$ parallel classes, so $S_A$ is an affine line intersecting $X$ for every $S \in \m{S}$. Consequently, from Proposition~\ref{prop} we have $|S_A \cap X|=1$ for all $S \in \m{S}$.  Take an element $b_i \in B_i$ for $1 \leq i \leq p$. Define $g$ such that 
$$g(c)=\left\{
                \begin{array}{ll}
                  k-p+1 & \text{if $c \in X$ or $c=b_i$ for some $1 \leq i \leq p$,} \\
                  1 & \text{otherwise.} 
                \end{array}
              \right.$$

Now we have to check that the conditions of Lemma~\ref{osztodas} are fulfilled. 
Let 
$$C=\{c \in A \cup B \ : \ \text{$c \in X$ or $c=b_i$ for some $1 \leq i \leq p$\}}.$$
In other words $C \subset A \cup B$ consist of the elements $c$ for which $g(c)=k-p+1$. 
$|C \cap A|=p$ as $|X|=p$ and $|C \cap B|=p$ as $C \cap B=\{b_1, b_2, \ldots, b_n$\}. Consequently,
$$\sum_{a \in A} g(a)=\sum_{b \in B} g(b)=(p^2-p)+p \cdot (k-p+1)=pk.$$

For any $S \in \m{S}$ we saw that $|S_A \cap C|=|S_A \cap X|=1$, and also $|S_B \cap C|=1$, as $S_B=B_i$ for some $1 \leq i \leq p$, so $S_B \cap C=\{b_i\}$. Consequently,
$$\sum_{s \in S_A} g(s) = \sum_{s \in S_B} g(s)= (p-1)+(k-p+1)=k$$
for all $S \in \m{S}$. Therefore, we can indeed apply Lemma~\ref{osztodas} which finishes the proof.

\end{proof}

\begin{Cor}\label{k^2}
$f(k,k^2) \sim k^2$
\end{Cor}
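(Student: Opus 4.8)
The plan is to sandwich $f(k,k^2)$ between the trivial-type lower bound coming from prime powers near $k$ and the double-counting upper bound of Lemma~\ref{l2}, and then let $k \to \infty$. Concretely, I would first note that Lemma~\ref{l2} gives
$$f(k,k^2) \leq \left\lfloor \frac{k^2 \left\lfloor \frac{k^2}{k} \right\rfloor}{k} \right\rfloor = \left\lfloor \frac{k^2 \cdot k}{k} \right\rfloor = k^2,$$
so $f(k,k^2) \leq k^2$ for every $k$. This is the easy direction and requires no asymptotics at all.

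For the lower bound I would use Theorem~\ref{pk}: for any prime power $p \leq k$ we have $f(k,pk) \geq p^2$, and since $pk \leq k^2$ and $f$ is monotone increasing in its second argument (the second of the three "simple observations" in Chapter~\ref{lower}), this gives $f(k,k^2) \geq f(k,pk) \geq p^2$. Thus it suffices to choose, for each $k$, a prime power $p = p(k) \leq k$ as large as possible, and show that $p(k)/k \to 1$. Equivalently, writing $p(k)$ for the largest prime not exceeding $k$, I need $p(k) \sim k$; then $f(k,k^2) \geq p(k)^2 \sim k^2$, and together with the upper bound $f(k,k^2) \leq k^2$ we get $f(k,k^2) \sim k^2$ by the squeeze.

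The one genuine ingredient is that the largest prime below $k$ is asymptotically $k$, i.e. that there is always a prime in $[(1-\varepsilon)k, k]$ for $k$ large. This follows from the Prime Number Theorem (Theorem~\ref{pnt}): if there were no prime in $((1-\varepsilon)k, k]$ for infinitely many $k$, then $\pi(k) - \pi((1-\varepsilon)k) = 0$ along that sequence, contradicting $\pi(k) \sim k/\log k$ since $\pi(k) - \pi((1-\varepsilon)k) \sim \varepsilon k/\log k \to \infty$. Hence $p(k) > (1-\varepsilon)k$ for all large $k$ and every fixed $\varepsilon > 0$, so $\liminf_{k\to\infty} p(k)/k \geq 1$, and trivially $p(k)/k \leq 1$, giving $p(k) \sim k$.

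I expect no real obstacle here; the only point of care is that Theorem~\ref{pk} is stated for prime powers, so one could equally invoke prime powers rather than primes, but primes already suffice and keep the PNT application clean. Assembling the pieces: for any $\varepsilon>0$ and all sufficiently large $k$,
$$(1-\varepsilon)^2 k^2 < p(k)^2 \leq f(k,k^2) \leq k^2,$$
so $1 \leq \liminf f(k,k^2)/k^2 \leq \limsup f(k,k^2)/k^2 \leq 1$, which is precisely $f(k,k^2) \sim k^2$.
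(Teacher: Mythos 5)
Your proposal is correct and follows essentially the same route as the paper: upper bound from Lemma~\ref{l2}, lower bound from Theorem~\ref{pk} plus monotonicity in $N$, and the Prime Number Theorem to produce a prime in $((1-\varepsilon)k,k]$ for large $k$, then squeeze. The only difference is cosmetic: you assert directly that $\pi(k)-\pi((1-\varepsilon)k)\sim\varepsilon k/\log k$, whereas the paper spells out the little-$o$ bookkeeping (handling the $\log(1-\varepsilon)$ shift in the denominator) needed to justify that subtraction of asymptotic estimates; both amount to the same computation.
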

\begin{proof}
Combining Theorem~\ref{pk} with the monotonicity of $f(k,N)$ on $N$, we know that for every $k$ and prime power $p \leq k$ the following inequality holds.
$$f(k,k^2) \geq f(k,pk)=p^2.$$
From Lemma~\ref{l2} we also know that $f(k,k^2)\leq k^2$.
%Itt nagyon nem találtam, hogy konkrétan mit kéne hivatkozni

Now we will use the same argument as in \cite[p. 494]{hardy75} about the density of prime numbers.
Let $0 < \varepsilon <1$. As $\log(1-\varepsilon)$ is a constant, evidently $$\log(k) \sim \log(k)+\log(1-\varepsilon),$$ thus it is easy to see that
$$o\left(\frac{k}{\log(k)}\right)=o\left(\frac{(1-\varepsilon)k}{\log(k)+\log(1-\varepsilon)}\right)=o\left(\frac{(1-\varepsilon)k}{\log((1-\varepsilon)k)}\right).$$
Using this equation and the prime number theorem (Theorem~\ref{pnt})
$$\pi(k)-\pi((1-\varepsilon)k)=\frac{k}{\log(k)}-\frac{k-\varepsilon k}{\log(k)+\log(1-\varepsilon)}+o\left(\frac{k}{\log(k)}\right).$$
Notice that from $\log(k) \sim \log(k)+\log(1-\varepsilon)$ we have
$$\frac{k}{\log(k)}-\frac{k}{\log(k)+\log(1-\varepsilon)}=\frac{k}{\log(k)} \cdot \left( 1-\frac{\log(k)}{\log(k)+\log(1-\varepsilon)} \right)=o\left(\frac{k}{\log(k)}\right).$$ 
Therefore
$$\pi(k)-\pi((1-\varepsilon)k)=\frac{\varepsilon k}{\log(k)+\log(1-\varepsilon)}+o\left(\frac{k}{\log(k)}\right).$$

There exists a $K_1$ such that $\log(k)>\log(1-\varepsilon)$ for $k>K_1$. There exists a $K_2$ such that for all $k > K_2$ the following inequality holds by the definition of the little-o notation.
$$\pi(k)-\pi((1-\varepsilon)k) > \frac{\varepsilon k}{\log(k)+\log(1-\varepsilon)}-\frac{\varepsilon}{2} \cdot \frac{k}{\log(k)}$$
Now for all $k>\max(K_1, K_2)$ we have
$$\pi(k)-\pi((1-\varepsilon)k) > \frac{\varepsilon k}{2\log(k)}-\frac{\varepsilon}{2} \cdot \frac{k}{\log(k)}=0.$$

We proved that for any $0 < \varepsilon < 1$ there exists a sufficiently large $K$ with the property that if $k>K$ then $\pi(k)-\pi((1-\varepsilon)k) > 0$. This means that there exists a prime number $k(1-\varepsilon)\leq p\leq k$. Therefore $$k^2 \geq f(k,k^2)\geq p^2 \geq k^2(1-\varepsilon)^2$$ for any given $0<\varepsilon<1$ and sufficiently large $k$, thus $$1 \geq \lim_{k\to\infty} \frac{f(k,k^2)}{k^2} \geq (1-\varepsilon)^2.$$ This holds for all $\varepsilon$, therefore $$1 = \lim_{k\to\infty}\frac{f(k,k^2)}{k^2} .$$
\end{proof}

\section{Summary} \label{summary}

\noindent Combining the results from Section~\ref{upper} and Section~\ref{lower} we know quite much about the order of magnitude of $f(k,N)$.

The most important result in our opinion is that in Theorem~\ref{thm} for fix $k$ we determined that the exact value of $f(k,N)$ is $\lef \frac{N}{k} \rrr$ up to a constant deviation depending only on $k$. In particular, for a fix $k$ we asymptotically determined $f(k,N)$.

If we consider only the pairs $(k,k^2)$ then Corollary~\ref{k^2} shows that $f(k,k^2) \sim k^2$. 

Also, from Theorem~\ref{pk} we can see that for a fixed prime power $p$ we have $f(k,pk) \sim p^2$ (in fact we know that they are equal if $k\geq p$).

We found the exact result for the following $(k,N)$ pairs.

\begin{itemize}
    \item If $k=2$ and $\lef \frac{N}{2}\rrr \geq R(7,7)$ then the upper and lower bounds from Theorem~\ref{f(2,N)} and Theorem~\ref{bigNprimek} meet, so we have
    $$f(2,N) = \left\lfloor \frac{N}{2} \right\rfloor + 4.$$
    \item If $k$ is a positive integer and $p$ is a prime power not bigger than $k$ then from Lemma~\ref{l2} and Theorem~\ref{pk} 
    $$f(k,pk) = p^2.$$
\end{itemize}

\section{Open problems} \label{open problems}

\noindent In this section we summarize the remaining unanswered questions. 

The next natural step in the study of our problem will be the following generalization.
\begin{?}
What is the clique number in higher powers of Kneser graphs with respect to the Xor product?
\end{?}  
\noindent This question is notably harder than the case we examined in this article.

We have seen that Lemma~\ref{l2} is sharp in a variety of cases if $N \leq k^2$. Therefore, the following question arises naturally.

\begin{Question}
\label{N<k^2}
Is the following equation true for all $N \leq k^2$?
$$f(k,N) = \left\lfloor \frac{N \left\lfloor \frac{N}{k} \right\rfloor}{k} \right\rfloor$$
\end{Question} 

Checking the equality for small values is rather hard due to the problem's computational complexity with simple methods. 
Though we do not have much computational data for Question~\ref{N<k^2}, but by the examination of the structure of semiintersecting families we think that it might be true.
%\bigbreak

%Consider Theorem~\ref{thm}. In our proof the exact value of $c$ remained a big question, however with Zoltán Füredi we proved that $ c= \Theta ( 4^k \sqrt{k})$, which we will prove in a forthcoming work.
%\begin{?}
%What is the order of magnitude of $c$?
%\end{?}

%\smallbreak

Another remaining aspect of our problem is the behaviour of $f$ when $N>k^2$. We know the trivial lower bound $\lef \frac{N}{k} \rrr$, and from Lemma~\ref{l1} we have a rather strong upper bound.  Consider the special case when $N=dk^n$. The results in our article determine the asymptotics of this case when $n=2$ and $d=1$ and when $n=1$ and $d=p$ for prime power $p$. However, when $n\geq 3$ we have little to no knowledge about the asymptotics of~$f$.

\begin{?}
What is the asymptotics of $f(k,dk^n)$?
\end{?}

\section*{Acknowledgement}
\noindent The authors would like to thank Gyula O.H. Katona for both his help in providing the problem and his useful comments.

\begin{comment}

\end{comment}

\bibliography{biblio}

\begin{thebibliography}{10}

\bibitem{Alb}
A.A. Albert and R.~Sandler.
\newblock {\em {An Introduction to Finite Projective Planes}}.
\newblock Dover Books on Mathematics. Dover Publications, 2014.

\bibitem{xorproduct}
Noga Alon and Eyal Lubetzky.
\newblock {Codes And Xor Graph Products}.
\newblock {\em Combinatorica}, 27:13--33, 02 2007.

\bibitem{balogh2015intersecting}
J{\'o}zsef Balogh, Shagnik Das, Michelle Delcourt, Hong Liu, and Maryam
  Sharifzadeh.
\newblock Intersecting families of discrete structures are typically trivial.
\newblock {\em Journal of Combinatorial Theory, Series A}, 132:224--245, 2015.

\bibitem{balogh2021intersecting}
József Balogh, Ramon~I. Garcia, Lina Li, and Adam~Zsolt Wagner.
\newblock Intersecting families of sets are typically trivial, 2021.

\bibitem{BRESAR20191017}
Boštjan Brešar and Mario Valencia-Pabon.
\newblock Independence number of products of kneser graphs.
\newblock {\em Discrete Mathematics}, 342(4):1017--1027, 2019.

\bibitem{zbMATH03049937}
N.~G. {de Bruijn} and P\'al {Erd\H{o}s}.
\newblock {On a combinatorial problem}.
\newblock {\em {Proc. Akad. Wet. Amsterdam}}, 51:1277--1279, 1948.

\bibitem{zbMATH03162924}
P\'al {Erd\H{o}s}, Chao {Ko}, and R.~{Rado}.
\newblock {Intersection theorems for systems of finite sets}.
\newblock {\em {Q. J. Math., Oxf. II. Ser.}}, 12:313--320, 1961.

\bibitem{gerbner2012almost}
D{\'a}niel Gerbner, Nathan Lemons, Cory Palmer, Bal{\'a}zs Patk{\'o}s, and Vajk
  Sz{\'e}csi.
\newblock Almost intersecting families of sets.
\newblock {\em SIAM Journal on Discrete Mathematics}, 26(4):1657--1669, 2012.

\bibitem{Gerbner}
Dániel Gerbner, Péter Erd\H{o}s, Nathan Lemons, Dhruv Mubayi, Cory Palmer,
  and Balázs Patkós.
\newblock {Two-Part Set Systems}.
\newblock {\em The electronic journal of combinatorics}, 19, 10 2011.

\bibitem{hardy75}
G.~H. Hardy and E.~M. Wright.
\newblock {\em {An Introduction to the Theory of Numbers}}.
\newblock Oxford, fourth edition, 1975.

\bibitem{hilton1967some}
Anthony~JW Hilton and Eric~C Milner.
\newblock Some intersection theorems for systems of finite sets.
\newblock {\em The Quarterly Journal of Mathematics}, 18(1):369--384, 1967.

\bibitem{Col}
Charles {J. Colbourn} and Jeffrey {H. Dinitz}.
\newblock Mutually orthogonal latin squares: a brief survey of constructions.
\newblock {\em Journal of Statistical Planning and Inference}, 95(1):9--48,
  2001.
\newblock Design Combinatorics.

\bibitem{article}
Gyula~OH Katona.
\newblock {A general 2-part Erdős-Ko-Rado theorem}.
\newblock {\em Opuscula Mathematica}, 37, 03 2017.

\bibitem{katona2020results}
Gyula~OH Katona.
\newblock Results on intersecting families of subsets, a survey.
\newblock In {\em New Trends In Algebras And Combinatorics-Proceedings Of The
  Third International Congress In Algebras And Combinatorics (Icac2017)}, page
  134. World Scientific, 2020.

\bibitem{Lid}
R.~Lidl, H.~Niederreiter, P.M. Cohn, G.C. Rota, B.~Doran, Cambridge~University
  Press, P.~Flajolet, M.~Ismail, T.Y. Lam, and E.~Lutwak.
\newblock {\em {Finite Fields}}.
\newblock Number v. 20, pt. 1 in EBL-Schweitzer. Cambridge University Press,
  1997.

\bibitem{ramsey1930problem}
Frank~P Ramsey.
\newblock On a problem of formal logic.
\newblock {\em Proceedings of the London Mathematical Society}, 2(1):264--286,
  1930.

\bibitem{Thomason1997GraphPA}
A.~Thomason.
\newblock Graph products and monochromatic multiplicities.
\newblock {\em Combinatorica}, 17:125--134, 1997.

\end{thebibliography}


\begin{thebibliography}{AAA}

\bibitem{Alb}
 A. A. Albert and R. Sandler, \textit{An introduction to finite projective planes},
Holt, Rinehart, and Winston, New York, NY, 1968.

\bibitem{Col}
 C. J. Colbourn and J. H. Dinitz, Mutually orthogonal Latin squares: A brief survey of constructions, \textit{J. Statist. Plann. Inference}, \textbf{95} (2001), 9–48.

\bibitem{Lid}
 R. Lidl and H. Niederreiter, \textit{Finite Fields}. Cambridge, U.K.: Cambridge Univ. Press, 1997.

\end{thebibliography}
\bibliographystyle{plain}

\bigskip

\noindent
{\bf András Imolay}\\
E\"otv\"os L\'or\'and University \\
H-1117 Budapest, Pázmány Péter sétány 1/C \\
\texttt{imolay.andras[at]gmail.com}
\medskip
\ \\
{\bf Anett Kocsis}\\
E\"otv\"os L\'or\'and University \\
H-1117 Budapest, Pázmány Péter sétány 1/C \\
\texttt{sakkboszi[at]gmail.com}
\medskip
\ \\
{\bf Ádám Schweitzer}\\
E\"otv\"os L\'or\'and University \\
H-1117 Budapest, Pázmány Péter sétány 1/C \\
\texttt{adamschweitzer1[at]gmail.com}

\end{document}